\numberwithin{equation}{section}
\newtheoremstyle{localrem}
	{5pt} 
	{5pt} 
	{\rm} 
	{} 
	{\bf} 
	{{\rm.}} 
	{.7em} 
	{} 
\theoremstyle{localrem}
\newtheorem{Definition}{Definition}[section]
\newtheoremstyle{localthm}
	{5pt} 
	{5pt} 
	{\sl} 
	{} 
	{\bf} 
	{{\rm.}} 
	{.7em} 
	{} 
\theoremstyle{localthm}
\newtheorem{Theorem}[Definition]{Theorem}
\newtheorem{Lemma}[Definition]{Lemma}
\DeclareFontFamily{U}{mathx}{\hyphenchar\font45}
\DeclareFontShape{U}{mathx}{m}{n}{
      <5> <6> <7> <8> <9> <10>
      <10.95> <12> <14.4> <17.28> <20.74> <24.88>
      mathx10
      }{}
\DeclareSymbolFont{mathx}{U}{mathx}{m}{n}
\DeclareMathAccent{\widecheck}{0}{mathx}{"71}
\def\bs{\boldsymbol}
\def\A{\bs{A}}
\def\u{\bs{u}}
\def\x{\bs{x}}
\def\xtil{\tilde{\bs{x}}}
\def\DD{{\cal D}}
\def\LL{{\cal L}}
\def\PP{{\cal P}}
\def\hat{\widehat}
\def\check{\widecheck}
\def\Pr{\operatorname{\mathbb{P}}}
\def\thb{\bs{\theta}}
\def\thbtil{\tilde{\bs{\theta}}}
\def\N{\mathbb{N}}
\def\R{\mathbb{R}}
\def\Rup{\R_\uparrow}
\def\argmax{\mathop{\rm arg\,max}}
\def\argmin{\mathop{\rm arg\,min}}
\def\lest{\le_{\mathrm{st}}}
\def\lelr{\le_{\mathrm{lr}}}
\newcommand{\proglang}[1]{\textsf{#1}}
\newcommand{\pkg}[1]{\textsf{#1}}
\author[1,2]{Alexandre M\"osching\,\orcidlink{0000-0002-8270-3724}\thanks{alexandre.moesching@roche.com}}
\author[1]{Lutz D\"umbgen\,\orcidlink{0000-0003-0172-9285}\thanks{lutz.duembgen@unibe.ch}}
\affil[1]{University of Bern, Department of Mathematics and Statistics, Bern, Switzerland}
\affil[2]{F. Hoffmann-La Roche Ltd, Nonclinical Biostatistics, Basel, Switzerland}
\date{\today}
\title{Estimation of a Likelihood Ratio Ordered\\ Family of Distributions}
\begin{document}
\maketitle

\begin{abstract}
Consider bivariate observations $(X_1,Y_1), \ldots, (X_n,Y_n) \in \R\times \R$ with unknown conditional distributions $Q_x$ of $Y$, given that $X = x$. The goal is to estimate these distributions under the sole assumption that $Q_x$ is isotonic in $x$ with respect to likelihood ratio order. If the observations are identically distributed, a related goal is to estimate the joint distribution $\LL(X,Y)$ under the sole assumption that it is totally positive of order two. An algorithm is developed which estimates the unknown family of distributions $(Q_x)_x$ via empirical likelihood. The benefit of the stronger regularization imposed by likelihood ratio order over the usual stochastic order is evaluated in terms of estimation and predictive performances on simulated as well as real data.

\paragraph{Keywords:}
Empirical likelihood, likelihood ratio order, order constraint, quasi-Newton method, stochastic order, total positivity.

\paragraph{AMS 2000 subject classifications:}
62G05, 62G08, 62H12. 

\paragraph{Acknowledgements:}
The authors are grateful to Johanna Ziegel, Alexander Jordan and Tilmann Gneiting for stimulating discussions and useful hints. We also thank a reviewer for constructive comments. This work was supported by Swiss National Science Foundation.

\end{abstract}


\section{Introduction}
%
Consider a univariate regression setting with observations $(X_1,Y_1)$, $(X_2,Y_2)$, \ldots, $(X_n,Y_n)$ in $\mathfrak{X} \times \R$, where $\mathfrak{X}$ is an arbitrary real set. We assume that conditional on $\bs{X} := (X_i)_{i=1}^n$, the observations $Y_1,Y_2,\ldots,Y_n$ are independent with distributions $\LL(Y_i \,|\, \bs{X}) = Q_{X_i}$, where the distributions $Q_x$, $x \in \mathfrak{X}$, are unknown. The goal is to estimate the latter under the sole assumption that $Q_x$ is isotonic in $x$ in a certain sense. That means, if $(X,Y)$ denotes a generic observation, the larger (or smaller) the value of $X$, the larger (or smaller) $Y$ tends to be. An obvious notion of order would be the usual stochastic order, which states that $Q_{x_1} \lest Q_{x_2}$ whenever $x_1 \le x_2$, that is, $Q_{x_1}((-\infty,y]) \ge Q_{x_2}((-\infty,y])$ for all $y\in\R$. This concept has been investigated and generalized by numerous authors, see \cite{Moesching_2020}, \cite{Henzi_Ziegel_Gneiting_2021} and the references cited therein. The latter paper illustrates the application of isotonic distributional regression in weather forecasting, and \cite{Henzi_etal_2021} use it to analyze the length of stay of patients in Swiss hospitals.

The present paper investigates a stronger notion of order, the so-called likelihood ratio order. The usual definition is that for arbitrary points $x_1 < x_2$ in $\mathfrak{X}$, the distributions $Q_{x_1}$ and $Q_{x_2}$ have densities $g_{x_1}$ and $g_{x_2}$ with respect to some dominating measure such that $g_{x_2}/g_{x_1}$ is isotonic on the set $\{g_{x_1} + g_{x_2} > 0\}$, and this condition will be denoted by $Q_{x_1} \lelr Q_{x_2}$. At first glance, this looks like a rather strong assumption coming out of thin air, but it is familiar from mathematical statistics or discriminant analyses and has interesting properties. For instance, $Q_{x_1} \lelr Q_{x_2}$ if and only if $Q_{x_1}(\cdot \,|\, B) \lest Q_{x_2}(\cdot \,|\, B)$ for any real interval $B$ such that $Q_{x_1}(B), Q_{x_2}(B) > 0$, where $Q_{x_j}(A \,|\, B) := Q_{x_j}(A\cap B)/Q_{x_j}(B)$. Furthermore, likelihood ratio ordering is a frequent assumption or implication of models in mathematical finance, see \cite{Beare_2015}, \cite{Jewitt_1991}. The notion of likelihood ratio order is reviewed thoroughly in \cite{Duembgen_Moesching_2022}, showing that it defines a partial order on the set of all probability measures on the real line which is preserved under weak convergence. That material generalizes definitions and results in \cite{Shaked_2007}.

Thus far, estimation of distributions under a likelihood ratio order constraint was mainly limited to settings with two or finitely many samples and populations. First, \cite{Dykstra_1995} estimated the parameters of two multinomial distributions that are likelihood ratio ordered via a restricted maximum likelihood approach. After reparametrization, they found that the maximization problem at hand had reduced to a specific bioassay problem treated by \cite{Robertson_1988} and which makes use of the theory of isotonic regression. It is then suggested that their approach generalizes well to any two distributions that are absolutely continuous with respect to some dominating measure. Later, \cite{Carolan_2005} focused on testing procedures for the equality of two distributions $Q_1$ and $Q_2$ versus the alternative hypothesis that $Q_1 \lelr Q_2$, in the specific case where the cumulative distribution functions $G_i$ of $Q_i$, $i=1,2$, are continuous. To this end, they made use of the equivalence between likelihood ratio order and the convexity of the ordinal dominance curve $\alpha \mapsto G_2\bigl(G_1^{-1}(\alpha)\bigr)$, $\alpha\in[0,1]$, which holds in case of $G_2$ being absolutely continuous with respect to $G_1$. The convexity of the ordinal dominance curve was also exploited by \cite{Westling_2019} to provide nonparametric maximum likelihood estimators of $G_1$ and $G_2$ under likelihood ratio order for discrete, continuous, as well as mixed continuous-discrete distributions using the greatest convex minorant of the empirical ordinal dominance curve. However, this method still necessitates the restrictive assumption that $G_2$ is absolutely continuous with respect to $G_1$. Other attempts at estimating two likelihood ratio ordered distributions include \cite{Yu_2017} who treat the estimation problem with a maximum smoothed likelihood approach, requiring the choice of a kernel and bandwidth parameters, and \cite{Hu_etal_2022} who suppose absolutely continuous distributions and model the logarithm of the ratio of densities as a linear combination of Bernstein polynomials.

To the best of our knowledge, only \cite{Dardanoni_Forcina_1998} considered the problem of estimating an arbitrary fixed number $\ell\ge 2$ of likelihood ratio ordered distributions $Q_1,Q_2,\ldots,Q_\ell$, all of them sharing the same finite support. They showed that the constrained maximum likelihood problem may be reparametrized to obtain a convex optimization problem with linear inequality constraints, and they propose to solve the latter via a constrained version of the Fisher scoring algorithm. At each step of their procedure, it is necessary to solve a quadratic programming problem.

Within the setting of distributional regression, we follow an empirical likelihood approach \citep{Owen_1988,Owen_2001} to estimate the family $(Q_x)_{x\in\mathfrak{X}}$ for arbitrary real sets $\mathfrak{X}$.  After a reparametrization similar to that of \cite{Dardanoni_Forcina_1998}, we show that the problem of maximizing the (empirical) likelihood under the likelihood ratio order constraint yields again a finite-dimensional convex optimization problem with linear inequality constraints. We did experiments with active set algorithms in the spirit of \cite{Duembgen_etal_2021} which are similar to the algorithms of \cite{Dardanoni_Forcina_1998}. But, as explained later, the computational burden may become too heavy for large sample sizes~$n$. Alternatively, we devise an algorithm which adapts and extends ideas from \cite{Jongbloed_1998} and \cite{Duembgen_2006} for the present setting. It makes use of a quasi-Newton approach, and new search directions are obtained via multiple isotonic weighted least squares regression.

There is an interesting aspect of the present estimation problem. If we assume that the observations $(X_i,Y_i)$ are independent copies of a generic random pair $(X,Y)$, the new estimation method may also be interpreted as an empirical likelihood estimator of the joint distribution of $(X,Y)$, hypothesizing that the latter is bivariate totally positive of order two (TP2). That is, for arbitrary intervals $A_1, A_2$ and $B_1, B_2$ such that $A_1 < A_2$ and $B_1 < B_2$ element-wise,
\[
	\Pr(X\in A_2, Y\in B_1) \Pr(X\in A_1, Y\in B_2) \
	\le \ \Pr(X\in A_1, Y\in B_1) \Pr(X\in A_2, Y\in B_2) .
\]
If the joint distribution of $(X,Y)$ has a density $h$ with respect to Lebesgue measure on $\R\times\R$, or if it is discrete with probability mass function $h$, then TP2 is equivalent to requiring that
\[
	h(x_1,y_2) h(y_1,x_2) \ \le \ h(x_1,y_1) h(x_2, y_2)
	\quad\text{whenever} \ \ x_1 < x_2, \ y_1 < y_2 ,
\]
and this is just a special case of multivariate total positivity of order two \citep{Karlin_1968}. For further equivalences and results in dimension two, see \cite{Duembgen_Moesching_2022}. Interestingly, this TP2 constraint is symmetric in $X$ and $Y$, and our algorithm exploits this symmetry. A different, more restrictive approach to the estimation of a TP2 distribution is proposed by \cite{Hutter_etal_2022}. They assume that the distribution of $(X,Y)$ has a smooth density with respect to Lebesgue measure on a given rectangle and devise a sieve maximum likelihood estimator.

The rest of the article is structured as follows. Section~\ref{Sec:EL} explains why empirical likelihood estimation of a family of likelihood ratio ordered distributions is essentially equivalent to the estimation of a discrete bivariate TP2 distribution. In Section~\ref{Sec:Estimation} we present an algorithm to estimate a bivariate TP2 distribution. In Section~\ref{Sec:Simulation.study}, a simulation study illustrates the benefits of the new estimation paradigm compared to the usual stochastic order constraint. Proofs and technical details are deferred to the appendix.

\section{Two versions of empirical likelihood modelling}
\label{Sec:EL}
%
With our observations $(X_i,Y_i)\in\mathfrak{X}\times \R$, $1 \le i \le n$, let
\[
	\{X_1, X_2, \ldots, X_n\} \ = \ \{x_1, \ldots, x_\ell\}
	\quad\text{and}\quad
	\{Y_1, Y_2, \ldots, Y_n\} \ = \ \{y_1, \ldots, y_m\},
\]
with $x_1 < \cdots < x_\ell$ and $y_1 < \cdots < y_m$. For an index pair $(j,k)$ with $1 \le j \le \ell$ and $1 \le k \le m$, let
\[
	w_{jk} \ := \ \# \bigl\{ i : (X_i,Y_i) = (x_j,y_k) \bigr\} .
\]
That means, the empirical distribution $\hat{R}_{\rm emp}$ of the observations $(X_i,Y_i)$ can be written as $\hat{R}_{\rm emp} = n^{-1} \sum_{j=1}^\ell \sum_{k=1}^m w_{jk}^{} \delta_{(x_j,y_k)}^{}$.

\subsection{Estimating the conditional distributions \texorpdfstring{$Q_x$}{Qx}}
%
To estimate $(Q_x)_{x \in \mathfrak{X}}$ under likelihood ratio ordering, we first estimate $(Q_{x_j})_{1 \le j \le \ell}$. If that results in $(\hat{Q}_{x_j})_{1 \le j \le \ell}$, we may define
\[
	\hat{Q}_x \ := \ \begin{cases}
		\hat{Q}_{x_1}
			& \text{if} \ x < x_1 , \\
		(1 - \lambda) \hat{Q}_{x_j} + \lambda \hat{Q}_{x_{j+1}}
			& \text{if} \ x = (1 - \lambda) x_j + \lambda x_{j+1}, \
				1 \le j < \ell, \ 0 < \lambda < 1 , \\
		\hat{Q}_{x_\ell}
			& \text{if} \ x > x_\ell .
	\end{cases}
\]
This piecewise linear extension preserves isotonicity with respect to $\lelr$, see Lemma~\ref{Lem:Lin.interpolation.lr}.

To estimate $Q_{x_1}, \ldots, Q_{x_\ell}$, we restrict our attention to distributions with support $\{y_1,\ldots,y_m\}$. That means, we assume temporarily that for $1 \le j \le \ell$,
\[
	Q_{x_j} \ = \ \sum_{k=1}^m q_{jk}^{} \delta_{y_k}^{}
\]
with weights $q_{j1}, \ldots, q_{jm} \ge 0$ summing to one. The empirical log-likelihood for the corresponding matrix $\bs{q} = (q_{jk})_{j,k} \in [0,1]^{\ell \times m}$ equals
\begin{equation}
\label{eq:ELLraw}
	L_{\rm raw}(\bs{q}) \ := \ \sum_{j=1}^\ell \sum_{k=1}^m w_{jk}^{} \log q_{jk}^{} .
\end{equation}
Then the goal is to maximize this log-likelihood over all matrices $\bs{q} \in [0,1]^{\ell\times m}$ such that
\begin{align}
\label{eq:EL1}
	\sum_{k=1}^m q_{jk}^{} \
	&= \ 1
		&&\text{for} \ 1 \le j \le \ell , \\
\label{ineq:EL}
	q_{j_1k_2}^{} q_{j_2k_1}^{} \
	&\le \ q_{j_1k_1}^{} q_{j_2k_2}^{}
		&&\text{for} \ 1 \le j_1 < j_2 \le \ell
		\ \text{and} \ 1 \le k_1 < k_2 \le m .
\end{align}
The latter constraint is equivalent to saying that $Q_{x_j}$ is isotonic in $j \in \{1,\ldots,\ell\}$ with respect to $\lelr$.

\subsection{Estimating the distribution of \texorpdfstring{$(X,Y)$}{(X,Y)}}
%
Suppose that the observations $(X_i,Y_i)$ are independent copies of a random pair $(X,Y)$ with unknown TP2 distribution $R$ on $\R\times\R$. An empirical likelihood approach to estimating $R$ is to restrict one's attention to distributions
\[
	R \ = \ \sum_{j=1}^\ell \sum_{k=1}^m h_{jk}^{} \delta_{(x_j,y_k)}^{}
\]
with $\ell m$ weights $h_{jk} \ge 0$ summing to one. The empirical log-likelihood of the corresponding matrix $\bs{h} = (h_{jk})_{j,k}$ equals $L_{\rm raw}(\bs{h})$ with the function $L_{\rm raw}$ defined in \eqref{eq:ELLraw}. But now the goal is to maximize $L_{\rm raw}(\bs{h})$ over all matrices $\bs{h} \in [0,1]^{\ell\times m}$ satisfying the constraints
\begin{equation}
\label{eq:EL2}
	\sum_{j=1}^\ell \sum_{k=1}^m h_{jk}^{}
		\ = \ 1
\end{equation}
and \eqref{ineq:EL}. As mentioned in the introduction, requirement \eqref{ineq:EL} for $\bs{h}$ is equivalent to $R$ being TP2. One can get rid of the constraint \eqref{eq:EL2} via a Lagrange trick and maximize
\[
	L(\bs{h}) \ := \ L_{\rm raw}(\bs{h}) - n h_{++} + n
\]
over all $\bs{h}$ satisfying \eqref{ineq:EL}, where $h_{++} := \sum_j \sum_k h_{jk}$. Indeed, if $\bs{h}$ is a matrix in $[0,\infty)^{\ell\times m}$ such that $L_{({\rm raw})}(\bs{h}) > - \infty$, then $\tilde{\bs{h}} := (h_{jk}/h_{++})_{j,k}$ satisfies \eqref{ineq:EL} if and only if $\bs{h}$ does, and
\[
	L(\bs{h}) \ = \ L_{\rm raw}(\tilde{\bs{h}}) + n (\log h_{++} - h_{++} + 1)
	\ \le \ L_{\rm raw}(\tilde{\bs{h}}) \ = \ L(\tilde{\bs{h}})
\]
with equality if and only if $h_{++} = 1$, that is, $\bs{h} = \tilde{\bs{h}}$.

\subsection{Equivalence of the two estimation problems}
%
For any matrix $\bs{a} \in \R^{\ell\times m}$ define the row sums $a_{j+} := \sum_k a_{jk}$ and column sums $a_{+k} := \sum_j a_{jk}$. If $\bs{h}$ is an arbitrary matrix in $[0,\infty)^{\ell\times m}$ such that $L_{\rm raw}(\bs{h}) > - \infty$, and if we write
\[
	h_{jk}^{} \ = \ p_j^{} q_{jk}^{}
	\quad\text{with} \ p_j^{} := h_{j+}^{} \ \text{and} \ q_{jk}^{} := h_{jk}^{} / h_{j+}^{} ,
\]
then $\bs{h}$ satisfies \eqref{ineq:EL} if and only if $\bs{q}$ does. Furthermore, $\bs{q}$ satisfies \eqref{eq:EL1}, and elementary algebra shows that
\[
	L(\bs{h}) \ = \ L_{\rm raw}(\bs{q})
		+ \sum_{j=1}^\ell \bigl( w_{j+}^{} \log p_j^{} - n p_j^{} + w_{j+}^{} \bigr) .
\]
The unique maximizer $\bs{p} = (p_j)_j$ of $\sum_j (w_{j+} \log p_j - n p_j + w_{j+})$ is the vector $(w_{j+}/n)_j$, and this implies the following facts:
\begin{itemize}
\item If $\hat{\bs{h}}$ is a maximizer of $L(\bs{h})$ under the constraints \eqref{ineq:EL}, then $\hat{h}_{j+} = w_{j+}/n$ for all $j$, and $\hat{q}_{jk} := \hat{h}_{jk}/\hat{h}_{j+}$ defines a maximizer $\hat{\bs{q}}$ of $L_{\rm raw}(\bs{q})$ under the constraints \eqref{eq:EL1} and \eqref{ineq:EL}.
\item If $\hat{\bs{q}}$ is a maximizer of $L_{\rm raw}(\bs{q})$ under the constraints \eqref{eq:EL1} and \eqref{ineq:EL}, then $\hat{h}_{jk} := (w_{j+}/n) \hat{q}_{jk}$ defines a maximizer $\hat{\bs{h}}$ of $L(\bs{h})$ under the constraints \eqref{ineq:EL}.
\end{itemize}

As a final remark, note that the two estimation problems are monotone equivariant in the following sense: If $(X, Y)$ is replaced with $(\tilde{X}, \tilde{Y})=(\sigma(X), \tau(Y))$ with strictly isotonic functions $\sigma:\mathfrak{X}\to\R$ and $\tau:\R \to \R$, then $\mathcal{L}(\tilde{Y}|\tilde{X}=\sigma(x)) = \mathcal{L}(\tau(Y)|X = x)$ for $x\in\mathfrak{X}$. Furthermore, the constraints of likelihood ratio ordered conditional distributions or of a TP2 joint distribution remain valid under such transformations.

\subsection{Calibration of rows and columns}
%
The previous considerations motivate to find a maximizer $\hat{\bs{h}} \in [0,\infty)^{\ell\times m}$ of $L(\bs{h})$ under the constraint \eqref{ineq:EL}, even if the ultimate goal is to estimate the conditional distributions $Q_x$, $x \in \mathfrak{X}$. They also indicate two simple ways to improve a current candidate $\bs{h}$ for $\hat{\bs{h}}$. Let $\tilde{\bs{h}}$ be defined via
\[
	\tilde{h}_{jk}^{} \ := \ (w_{j+}^{}/n) h_{jk}^{}/h_{j+}^{} ,
\]
i.e.\ we rescale the rows of $\bs{h}$ such that the new row sums $\tilde{h}_{j+}$ coincide with the empirical weights $w_{j+}/n$. Then
\[
	L(\tilde{\bs{h}}) - L(\bs{h})
	\ = \ \sum_{j=1}^\ell
		\Bigl( w_{j+}^{} \log \Bigl( \frac{w_{j+}}{n h_{j+}} \Bigr)
			+ n h_{j+}^{} - w_{j+}^{} \Bigr)
	\ \ge \ 0
\]
with equality if and only if $\tilde{\bs{h}} = \bs{h}$. Similarly, one can improve $\bs{h}$ by rescaling its columns, i.e.\ replacing $\bs{h}$ with $\tilde{\bs{h}}$, where
\[
	\tilde{h}_{jk}^{} \ := \ (w_{+k}^{}/n) h_{jk}^{}/h_{+k}^{} .
\]

\section{Estimation}
\label{Sec:Estimation}
%
\subsection{Dimension reduction}
%
The minimization problem mentioned before involves a parameter $\bs{h} \in [0,\infty)^{\ell\times m}$ under $\binom{\ell}{2} \binom{m}{2}$ nonlinear inequality constraints. The parameter space and the number of constraints may be reduced as follows.

\begin{Lemma}
\label{Lem:Dim.reduction}
Let $\PP$ be the set of all index pairs $(j,k)$ such that there exist indices $1 \le j_1 \le j \le j_2 \le \ell$ and $1 \le k_1 \le k \le k_2 \le m$ with $w_{j_1k_2}, w_{j_2k_1} > 0$.\\[1ex]
\textbf{(a)} \ If $\bs{h} \in [0,\infty)^{\ell\times m}$ satisfies \eqref{ineq:EL} and $L(\bs{h}) > - \infty$, then $h_{jk} > 0$ for all $(j,k) \in \PP$.\\[1ex]
\textbf{(b)} \ If such a matrix $\bs{h}$ is replaced with $\tilde{\bs{h}} := \bigl( 1_{[(j,k) \in \PP]} h_{jk} \bigr)_{j,k}$, then $\tilde{\bs{h}}$ satisfies \eqref{ineq:EL}, too, and $L(\tilde{\bs{h}}) \ge L(\bs{h})$ with equality if and only if $\tilde{\bs{h}} = \bs{h}$.\\[1ex]
\textbf{(c)} \ If $\bs{h} \in [0,\infty)^{\ell\times m}$ such that $\{(j,k)\colon h_{jk} > 0\} = \PP$, then constraint~\eqref{ineq:EL} is equivalent to
\begin{equation}
\label{ineq:EL2}
	h_{j-1,k}^{} h_{j,k-1} \
	\le \ h_{j-1,k-1}^{} h_{j,k}^{}
	\quad\text{for} \ 1 < j \le \ell
		\ \text{and} \ 1 < k \le m .
\end{equation}
\end{Lemma}

All in all, we may restrict our attention to parameters $\bs{h} \in (0,\infty)^{\PP}$ satisfying \eqref{ineq:EL2}, where $h_{jk} := 0$ for $(j,k) \not\in \PP$. Note that \eqref{ineq:EL2} involves only $(\ell-1)(m-1)$ inequalities, and the inequality for one particular index pair $(j,k)$ is nontrivial only if the two pairs $(j-1,k),(j,k-1)$ belong to $\PP$.

The set $\PP$ consists of all pairs $(j,k)$ such that the support of the empirical distribution $\hat{R}_{\rm emp}$ contains a point $(x_{j_1},y_{k_2})$ ``northwest'' and a point $(x_{j_2},y_{k_1})$ ``southeast'' of $(x_j,y_k)$. If $\PP$ contains two pairs $(j_2,k_1), (j_1,k_2)$ with $j_1 < j_2$ and $k_1 < k_2$, then it contains the whole set $\{j_1,\ldots,j_2\} \times \{k_1,\ldots,k_2\}$. Figure~\ref{Fig:Design} illustrates the definition of $\PP$. It also illustrates two alternative codings of $\PP$: An index pair $(j,k)$ belongs to $\PP$ if and only if $m_j \le k \le M_j$, where
\begin{align*}
	m_j \ &:= \ \min \bigl\{ k : w_{j'k} > 0 \ \text{for some} \ j' \ge j \bigr\} , \\
	M_j \ &:= \ \max \bigl\{ k : w_{j'k} > 0 \ \text{for some} \ j' \le j \bigr\} .
\end{align*}
Note that $m_j \le M_j$ for all $j$, $1 = m_1 \le \cdots \le m_\ell$, and $M_1 \le \cdots \le M_\ell = m$. Analogously, a pair $(j,k)$ belongs to $\PP$ if and only if $\ell_k \le j \le L_k$, where
\begin{align*}
	\ell_k \ &:= \ \min \bigl\{ j : w_{jk'} > 0 \ \text{for some} \ k' \ge k \bigr\} , \\
	L_k \ &:= \ \max \bigl\{ j : w_{jk'} > 0 \ \text{for some} \ k' \le k \bigr\} .
\end{align*}
Here $\ell_k \le L_k$ for all $k$, $1 = \ell_1 \le \cdots \le \ell_M$, and $L_1 \le \cdots \le L_m = \ell$.

\begin{figure}
\centering
\includegraphics[width=0.8\textwidth]{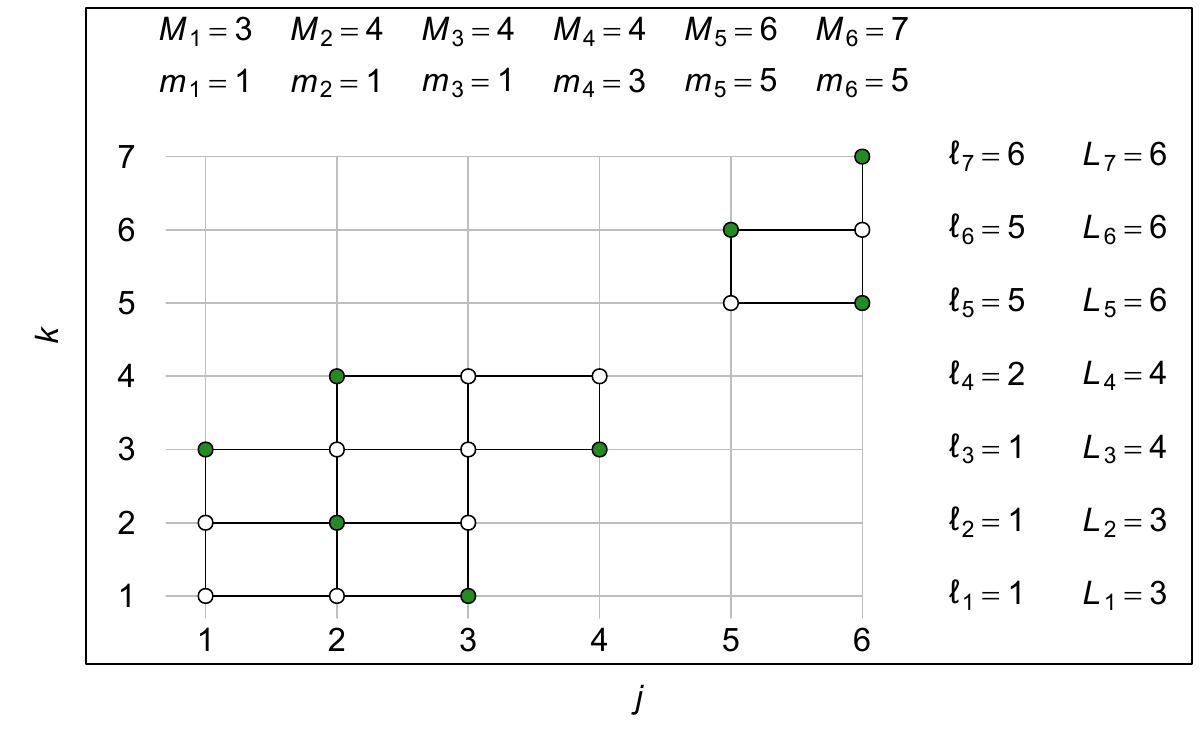}
\caption{In this specific example, $n \ge 8$ raw observations yielded $\ell = 6$ different values $x_j$ and $m = 7$ different values $y_k$. The green dots represent those $(j,k)$ with $w_{jk} > 0$. The green dots and black circles represent the set $\PP$.}
\label{Fig:Design}
\end{figure}

Note that by definition, for any index pair $(j,k)$,
\begin{align}
	k \le M_j \quad \text{if and only if} \quad j \ge l_k, \label{eq:northwest}\\
	k \ge m_j \quad \text{if and only if} \quad j \le L_k. \label{eq:southeast}
\end{align}

\subsection{Reparametrization and reformulation}
%
If we replace a parameter $\bs{h} \in (0,\infty)^{\PP}$ with its component-wise logarithm $\thb \in \R^{\PP}$, then property \eqref{ineq:EL2} is equivalent to
\begin{equation}
\label{ineq:ELL2}
	\theta_{j-1,k-1}^{} + \theta_{j,k}^{} - \theta_{j-1,k}^{} - \theta_{j,k-1}^{} \ \ge \ 0
	\quad \text{whenever} \ (j-1,k), (j,k-1) \in \PP .
\end{equation}
The set of all $\thb \in \R^{\PP}$ satisfying \eqref{ineq:ELL2} is a closed convex cone and is denoted by $\Theta$.

Now our goal is to minimize
\begin{equation}
\label{eq:fELL}
	f(\thb) \ := \ \sum_{(j,k) \in \PP}
		\bigl( - w_{jk}^{} \theta_{jk}^{} + n \exp(\theta_{jk}^{}) \bigr)
\end{equation}
over all $\thb \in \Theta$.

\begin{Theorem}
\label{Thm:Existence.uniqueness}
There exists a unique minimizer $\hat{\thb}$ of $f(\thb)$ over all $\thb \in \Theta$.
\end{Theorem}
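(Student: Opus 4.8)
The plan is to get uniqueness for free from strict convexity, and to obtain existence by transferring the problem to the compact set of probability matrices, where a standard semicontinuity argument applies.

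\textbf{Uniqueness.} The objective $f$ in \eqref{eq:fELL} is a sum, over $(j,k)\in\PP$, of the functions $t\mapsto -w_{jk}t + n\exp(t)$ of the single coordinate $\theta_{jk}$; each of these is strictly convex on $\R$ (a linear term plus a strictly convex one). Hence $f$ is strictly convex on $\R^\PP$, and since $\Theta$ is convex, $f$ has at most one minimizer over $\Theta$. So only existence requires work.

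\textbf{Existence.} I would use the correspondence set up just before the theorem. For $\thb\in\Theta$ put $\bs h := (\exp\theta_{jk})_{(j,k)\in\PP}$ (extended by $0$ off $\PP$); then $f(\thb) = n - L(\bs h)$ with $L(\bs h) = L_{\rm raw}(\bs h) - nh_{++} + n$, and the normalization $\tilde{\bs h} := \bs h/h_{++}$ satisfies \eqref{ineq:EL} whenever $\bs h$ does, with $L(\bs h)\le L(\tilde{\bs h}) = L_{\rm raw}(\tilde{\bs h})$. Consequently $\inf_\Theta f \ge n - \sup_{\bs q\in\SS} L_{\rm raw}(\bs q)$, where $\SS$ is the set of matrices $\bs q\in[0,\infty)^\PP$ with $q_{++}=1$ satisfying \eqref{ineq:EL}, with equality as soon as this supremum is attained at a matrix with all $\PP$-entries strictly positive. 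Now $\SS$ is compact, being the intersection of the probability simplex over $\PP$ with the closed set cut out by the inequalities \eqref{ineq:EL}; and $\bs q\mapsto L_{\rm raw}(\bs q) = \sum_{(j,k)\in\PP} w_{jk}\log q_{jk}$ is upper semicontinuous on $\SS$ (with $0\log 0 := 0$ the terms with $w_{jk}=0$ are constant, and each remaining term is continuous into $[-\infty,\infty)$), so the supremum is attained at some $\hat{\bs q}\in\SS$. To see that the optimal value is finite, exhibit one feasible matrix of finite log-likelihood: the product matrix $q^0_{jk} := (w_{j+}/n)(w_{+k}/n)$ satisfies \eqref{ineq:EL} with equality and has $q^0_{jk}>0$ whenever $w_{jk}>0$; restricting it to $\PP$ and renormalizing, Lemma~\ref{Lem:Dim.reduction}(b) keeps it in $\SS$ with $L_{\rm raw}$-value at least $L_{\rm raw}(\bs q^0) > -\infty$. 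Hence $L_{\rm raw}(\hat{\bs q}) > -\infty$, so $L(\hat{\bs q}) = L_{\rm raw}(\hat{\bs q}) > -\infty$, and Lemma~\ref{Lem:Dim.reduction}(a) forces $\hat q_{jk}>0$ for all $(j,k)\in\PP$. Setting $\hat\theta_{jk} := \log\hat q_{jk}$ then gives $\hat\thb\in\Theta$ with $f(\hat\thb) = n - L_{\rm raw}(\hat{\bs q}) = \inf_\Theta f$, the desired minimizer.

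\textbf{Main obstacle.} The subtle part is existence, and within it the guarantee that the optimum does not run off to the boundary, i.e.\ that no coordinate $\theta_{jk}$ escapes to $-\infty$. Passing to $\bs q$ on the compact simplex sidesteps the recession-direction analysis that a direct treatment of $f$ on the unbounded cone $\Theta$ would require; the structural input that does the work — that a matrix of finite likelihood has all $\PP$-entries strictly positive — is exactly Lemma~\ref{Lem:Dim.reduction}(a), which encodes that every cell of $\PP$ is hemmed in by data points to its north-west and to its south-east. Care is needed only in the two bookkeeping points above: that normalization and restriction to $\PP$ preserve feasibility and do not decrease $L$, and that the product matrix provides a feasible point with finite log-likelihood.
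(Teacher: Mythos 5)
Your proof is correct, but it takes a genuinely different route from the paper. The paper proves existence by establishing coercivity of $f$ on the cone $\Theta$ directly: arguing by contradiction, it extracts a recession direction $\u$ from a sequence along which $f$ stays bounded while $\|\thb\|\to\infty$, shows that $\u\in\Theta\cap(-\infty,0]^{\PP}$ with $u_{jk}=0$ whenever $w_{jk}>0$, and then reruns the combinatorial ``hemmed-in'' induction of Lemma~\ref{Lem:Dim.reduction}(a) on $\u$ to conclude $\u=\bs 0$, a contradiction. You instead pass to the compact probability simplex $\SS$ over $\PP$, use upper semicontinuity of $L_{\rm raw}$ to obtain a maximizer $\hat{\bs q}$ there, exhibit the product matrix $\bigl((w_{j+}/n)(w_{+k}/n)\bigr)_{j,k}$ as a feasible point with finite log-likelihood to ensure $L_{\rm raw}(\hat{\bs q})>-\infty$, invoke Lemma~\ref{Lem:Dim.reduction}(a) directly to get $\hat q_{jk}>0$ on $\PP$, and pull back via $\hat\theta_{jk}=\log\hat q_{jk}$. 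The two arguments rely on the same structural fact — that every cell of $\PP$ is bracketed by positive data weights to its northwest and southeast — but you invoke the already-proved lemma, whereas the paper reproves the inductive step inside the coercivity argument (arguably it could have cited the lemma, but it runs the argument on the vector $\u$ rather than on a matrix with $h_{jk}>0$, so the reduction is not literal). Your compactness route buys you a cleaner separation of concerns (the topology is handled once on the simplex, and the combinatorics is handled once in the lemma), at the small cost of carefully tracking the bijection $\thb\leftrightarrow\bs h$, the row/column renormalization, and the restriction to $\PP$; all of these bookkeeping steps in your write-up check out, including the observation that the rank-one product matrix satisfies \eqref{ineq:EL} with equality and has positive entries wherever $w_{jk}>0$.
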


Uniqueness follows directly from $f$ being strictly convex, but existence is less obvious, unless $w_{jk} > 0$ for all $(j,k)$. With $\hat{\thb}$ at hand, the corresponding solution $\hat{\bs{h}} \in [0,\infty)^{\ell\times m}$ of the original problem is given by
\[
	\hat{h}_{jk} \ = \ \begin{cases}
		\exp(\hat{\theta}_{jk}) & \text{if} \ (j,k) \in \PP , \\
		0 & \text{else} .
	\end{cases}
\]
In the proof of Theorem~\ref{Thm:Existence.uniqueness} and from now on, we view $\R^{\PP}$ as a Euclidean space with inner product $\langle\x,\bs{y}\rangle := \sum_{(j,k) \in \PP} x_{jk}^{} y_{jk}^{}$ and the corresponding norm $\|\x\| := \langle\x,\x\rangle^{1/2}$. For a differentiable function $f : \R^{\PP} \to \R$, its gradient is defined as $\nabla f(\x) := \bigl( \partial f(\x) / \partial x_{jk}^{} \bigr)_{(j,k) \in \PP}$.

Let us explain briefly why traditional optimization algorithms may become infeasible for large sample sizes $n$. Depending on the input data, the set $\PP$ may contain more than $cn^2$ parameters, and the constraint \eqref{ineq:ELL2} may involve at least $cn^2$ linear inequalities, where $c > 0$ is some generic constant. Even if we restrict our attention to parameters $\thb \in \Theta$ such that a given subset of the inequalities in \eqref{ineq:ELL2} are equalities, they span a linear space of dimension at least $\max(\ell,m)$, because all parameters $\theta_{jm_j}$ and $\theta_{\ell_kk}$ are unconstrained, and $\max(\ell,m)$ may be at least $cn$. Just determining a gradient and Hessian matrix of the target function $f$ within this linear subspace would then require at least $cn^4$ steps. Consequently, traditional minimization algorithms involving exact Newton steps may be computationally infeasible. Alternatively, we propose an iterative algorithm with quasi Newton steps each of which has running time $O(n^2)$, and the required memory is of this order, too.

\subsection{Finding a new proposal}
%
\paragraph{Version~1.}
To determine whether a given parameter $\thb \in \R^{\PP}$ is already optimal and, if not, to obtain a better one, we reparametrize the problem a second time. Let $\thbtil = T(\thb) \in \R^{\PP}$ be given by
\[
	\tilde{\theta}_{jk} \ = \ \begin{cases}
		\theta_{jm_j} & \text{if} \ k = m_j , \\
		\theta_{jk} - \theta_{j,k-1} & \text{if} \ m_j < k \le M_j .
	\end{cases}
\]
Then $\thb = T^{-1}(\thbtil) = \bigl( \sum_{k'=m_j}^k \tilde{\theta}_{jk'} \bigr)_{j,k}$, and $f(\thb)$ is equal to
\begin{align*}
	\tilde{f}(\thbtil) \
	:= \ &\sum_{j=1}^\ell \sum_{k=m_j}^{M_j}
		\Bigl( - w_{jk} \sum_{k'=m_j}^k \tilde{\theta}_{jk'}
			+ n \exp \Bigl( \sum_{k'=m_j}^k \tilde{\theta}_{jk'} \Bigr) \Bigr) \\
	= \ &\sum_{j=1}^\ell \sum_{k=m_j}^{M_j}
		\Bigl( - \underline{w}_{jk} \tilde{\theta}_{jk}
			+ n \exp \Bigl( \sum_{k'=m_j}^k \tilde{\theta}_{jk'} \Bigr) \Bigr)
		\quad\text{with} \ \underline{w}_{jk} := \sum_{k'=k}^{M_j} w_{jk'} .
\end{align*}
More importantly, we may represent $\PP$ as
\begin{align*}
	\PP \ &= \  \bigl\{ (j,m_j) : 1 \le j \le \ell \bigr\}
		\cup \bigl\{ (j,k) : 1 \le j \le \ell, m_j < k \le M_j \bigr\} \\
		\ &= \ \bigl\{ (j,m_j) : 1 \le j \le \ell \bigr\}
		\cup \bigcup_{k=2}^m \bigl\{ (j,k) : \ell_k \le j \le L_{k-1} \bigr\},
\end{align*}
where the latter equation follows from \eqref{eq:northwest} and \eqref{eq:southeast}. Now the constraints \eqref{ineq:ELL2} read
\begin{equation}
\label{ineq:ELL.tilde}
	\bigl( \tilde{\theta}_{jk} \bigr)_{j=\ell_k}^{L_{k-1}} \in \Rup^{L_{k-1} - \ell_k + 1}
	\quad \text{whenever} \ 2 \le k \le m \ \text{and} \
		L_{k-1} - \ell_k + 1 \ge 2 .
\end{equation}
Here $\Rup^d := \{ \bs{x} \in \R^d : x_1 \le \cdots \le x_d\}$. The set of $\thbtil \in \R^{\PP}$ satisfying \eqref{ineq:ELL.tilde} is denoted by $\tilde{\Theta}$.

For given $\thb$ and $\thbtil = T(\thb)$, we approximate $\tilde{f}(\xtil)$ by the quadratic function
\begin{align*}
	\xtil \ \mapsto \
	&\tilde{f}(\thbtil)
		+ \bigl\langle \nabla \tilde{f}(\thbtil), \xtil - \thbtil \bigr\rangle
		+ 2^{-1} \sum_{(j,k) \in \PP}
			\frac{\partial^2 \tilde{f}}{\partial \tilde{\theta}_{jk}^2}(\thbtil)
			(\tilde{x}_{jk} - \tilde{\theta}_{jk})^2 \\
	&= \ \mathrm{const}(\thb)
		+ 2^{-1} \sum_{(j,k) \in \PP} \tilde{v}_{jk}(\thb)
			(\tilde{x}_{jk} - \tilde{\gamma}_{jk}(\thb))^2 \\
	&= \ \mathrm{const}(\thb)
		+ 2^{-1} \sum_{j=1}^\ell \tilde{v}_{jm_j}(\thb)
			(\tilde{x}_{jm_j} - \tilde{\gamma}_{jm_j}(\thb))^2 \\
	&\qquad\qquad
		+ \ 2^{-1} \sum_{k=2}^m \sum_{\ell_k \le j \le L_{k-1}} \tilde{v}_{jk}(\thb)
			(\tilde{x}_{jk} - \tilde{\gamma}_{jk}(\thb))^2
\end{align*}
with
\begin{align*}
	\tilde{v}_{jk}(\thb) \
	&:= \ \frac{\partial^2 \tilde{f}}{\partial \tilde{\theta}_{jk}^2}(\thbtil)
		&&= \ n \sum_{k'=k}^{M_j} \exp(\theta_{jk'}) , \\
	\tilde{\gamma}_{jk}(\thb) \
	&:= \ \tilde{\theta}_{jk}
			- \tilde{v}_{jk}(\thb)^{-1}
				\frac{\partial \tilde{f}}{\partial \tilde{\theta}_{jk}}(\thbtil)
		&&= \ T_{jk}(\thb)
			+ \tilde{v}_{jk}(\thb)^{-1} \underline{w}_{jk} - 1 .
\end{align*}
This quadratic function of $\xtil$ is easily minimized over $\tilde{\Theta}$ via the pool-adjacent-violators algorithm, applied to the subtuple $(\tilde{x}_{jk})_{j=\ell_k}^{L_{k-1}}$ for each $k=2,\ldots,m$ separately. Then we obtain the proposal
\[
	\Psi^{\rm row}(\thb) \ := \ T^{-1}(\thbtil_*(\thb))
	\quad\text{with}\quad
	\thbtil_*(\thb) \ := \ \argmin_{\xtil \in \tilde{\Theta}}
		\sum_{(j,k) \in \PP} \tilde{v}_{jk}(\thb)
			(\tilde{x}_{jk} - \tilde{\gamma}_{jk}(\thb))^2 .
\]
Interestingly, if $\thb$ is row-wise calibrated in the sense that $n \sum_{k=m_j}^{M_j} \exp(\theta_{jk}) = w_{j+}$ for $1 \le j \le \ell$, then $\tilde{\gamma}_{jm_j}(\thb) = \tilde{\theta}_{jm_j}$ and thus $\Psi^{\rm row}_{jm_j}(\thb) = \theta_{jm_j}$ for $1 \le j \le \ell$.

\paragraph{Version~2.}
Instead of reparametrizing $\thb \in \Theta$ in terms of its values $\theta_{jm_j}$, $1 \le j \le \ell$, and its increments within rows, one could reparametrize it in terms of its values $\theta_{\ell_kk}$, $1 \le k \le m$, and its increments within columns, leading to a proposal $\Psi^{\rm col}(\thb)$. Here, $\Psi^{\rm col}_{\ell_kk}(\thb) = \theta_{\ell_kk}$ for $1 \le k \le m$, provided that $\thb$ is column-wise calibrated.

\subsection{Calibration}
%
In terms of the log-parametrization with $\thb \in \Theta$, the row-wise calibration mentioned earlier for $\bs h$ means to replace $\theta_{jk}$ with 
\[
	\theta_{jk} - \log \bigl( 
		\sum_{k'=m_j}^{M_j} \exp(\theta_{jk'}) 
	\bigr) + \log(w_{j+}/n).
\]
Analogously, replacing $\theta_{jk}$ with
\[
	\theta_{jk} - \log \bigl( 
		\sum_{j'=\ell_k}^{L_k} \exp(\theta_{j'k}) 
	\bigr) + \log(w_{+k}/n)
\]
leads to a column-wise calibrated parameter $\thb$. Iterating these calibrations alternatingly, leads to a parameter which is (approximately) calibrated, row-wise as well as column-wise.

\subsection{From new proposal to new parameter}
%
Both functions $\Psi = \Psi^{\rm row}, \Psi^{\rm col}$ have some useful properties summarized in the next lemma.

\begin{Lemma}
\label{Lem:Properties.Psi}
The function $\Psi$ is continuous on $\Theta$ with $\Psi(\hat{\thb}) = \hat{\thb}$. For $\thb \in \Theta \setminus \{\hat{\thb}\}$,
\[
	\delta(\thb) \ := \ \bigl\langle \nabla f(\thb), \thb - \Psi(\thb) \bigr\rangle
	\ > \ 0 ,
\]
\[
	f(\thb) - f(\hat{\thb}) \ \le \ \max \bigl( 2 \delta(\thb),
		\beta_1(\thb) \sqrt{\delta(\thb)} \|\thb - \hat{\thb}\| \bigr) ,
\]
and
\[
	\max_{t \in [0,1]} \,
		\Bigl( f(\thb) - f \bigl( (1 - t)\thb + t \Psi(\thb) \bigr) \Bigr)
	\ \ge \ \min \Bigl( 2^{-1} \delta(\thb),
		\frac{\delta(\thb)^2}{\beta_2(\thb) \|\thb - \Psi(\thb)\|^2} \Bigr)
\]
with continuous functions $\beta_1, \beta_2 : \Theta \to (0,\infty)$.
\end{Lemma}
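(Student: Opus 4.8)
The plan is to analyze $\Psi$ via the quadratic surrogate that defines it. Write $\thbtil=T(\thb)$ and let $Q_{\thb}(\xtil)$ denote the quadratic function $\mathrm{const}(\thb)+2^{-1}\sum_{(j,k)\in\PP}\tilde v_{jk}(\thb)(\tilde x_{jk}-\tilde\gamma_{jk}(\thb))^2$, so that $\thbtil_*(\thb)=\argmin_{\xtil\in\tilde\Theta}Q_{\thb}(\xtil)$ and $\Psi(\thb)=T^{-1}(\thbtil_*(\thb))$. Continuity of $\Psi$ on $\Theta$ follows because $T$ and $T^{-1}$ are linear homeomorphisms, the coefficients $\tilde v_{jk}(\thb)$ and $\tilde\gamma_{jk}(\thb)$ are continuous in $\thb$ with $\tilde v_{jk}(\thb)>0$ (each is $n$ times a sum of exponentials), and the pool-adjacent-violators projection onto the polyhedral cone $\tilde\Theta$ — which is an isotonic least-squares projection in a positive-definite diagonal metric — depends continuously on the data $(\tilde\gamma(\thb),\tilde v(\thb))$. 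For the fixed-point property, note that $\hat\thb$ minimizes $f$ over $\Theta$, so $\hat\thbtil:=T(\hat\thb)$ minimizes $\tilde f$ over $\tilde\Theta$; since $Q_{\hat\thb}$ is the second-order Taylor surrogate of $\tilde f$ at $\hat\thbtil$ built from the exact diagonal of the Hessian and the exact gradient, its unconstrained minimizer in each block shares the stationarity/complementary-slackness structure of $\hat\thbtil$ relative to $\tilde\Theta$, and one checks that $\argmin_{\tilde\Theta}Q_{\hat\thb}=\hat\thbtil$; hence $\Psi(\hat\thb)=\hat\thb$. (Here one uses that the gradient of $\tilde f$ and of $Q_{\hat\thb}$ agree at $\hat\thbtil$, so the variational inequality characterizing the projection is the same.)

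For the descent inequality $\delta(\thb)>0$ when $\thb\neq\hat\thb$: the vector $\thbtil_*(\thb)-\thbtil$ is a feasible direction in the convex cone $\tilde\Theta$ at $\thbtil$ (since $\tilde\Theta$ is convex and both endpoints lie in it), and because it is the displacement toward the minimizer of the strictly convex quadratic $Q_{\thb}$, one obtains $\langle\nabla Q_{\thb}(\thbtil),\thbtil_*(\thb)-\thbtil\rangle<0$ unless $\thbtil$ already minimizes $Q_{\thb}$ over $\tilde\Theta$. But $\nabla Q_{\thb}(\thbtil)=\nabla\tilde f(\thbtil)$ by construction of the surrogate, and $\langle\nabla\tilde f(\thbtil),\thbtil_*(\thb)-\thbtil\rangle=\langle\nabla f(\thb),\Psi(\thb)-\thb\rangle=-\delta(\thb)$ by the chain rule applied to $\tilde f=f\circ T^{-1}$ (the linearity of $T$ makes the adjoint bookkeeping clean). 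It remains to rule out the degenerate case: if $\thbtil$ minimizes $Q_{\thb}$ over $\tilde\Theta$, the variational inequality for $Q_{\thb}$ coincides with that for $\tilde f$ (same gradient, and the Hessian term drops out at the base point), so $\thbtil$ would minimize $\tilde f$ over $\tilde\Theta$, forcing $\thb=\hat\thb$ by uniqueness (Theorem~\ref{Thm:Existence.uniqueness}).

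The two quantitative bounds are the technical heart and the main obstacle. For the lower bound on the achievable decrease along the segment $t\mapsto(1-t)\thb+t\Psi(\thb)$: expand $g(t):=\tilde f\bigl((1-t)\thbtil+t\thbtil_*(\thb)\bigr)$, use $g'(0)=-\delta(\thb)<0$, and bound $g''(t)$ on $[0,1]$ in terms of $\|\thbtil_*(\thb)-\thbtil\|^2=\|\thb-\Psi(\thb)\|^2$ (up to the equivalence of norms induced by $T$) times a local bound on the largest eigenvalue of the Hessian of $\tilde f$, which is continuous in $\thb$; optimizing the resulting one-dimensional quadratic/exponential bound over $t\in[0,1]$ yields $\min\bigl(2^{-1}\delta(\thb),\delta(\thb)^2/(\beta_2(\thb)\|\thb-\Psi(\thb)\|^2)\bigr)$ after absorbing the Hessian bound and the norm-equivalence constant into a continuous $\beta_2:\Theta\to(0,\infty)$. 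For the upper bound on $f(\thb)-f(\hat\thb)$: combine convexity of $f$, which gives $f(\thb)-f(\hat\thb)\le\langle\nabla f(\thb),\thb-\hat\thb\rangle$, with a comparison between the direction $\thb-\hat\thb$ and the surrogate direction $\thb-\Psi(\thb)$. The key point is that $\Psi(\thb)$ is the constrained minimizer of a quadratic whose curvature matches $f$'s diagonal Hessian entries, so $\hat\thb$ and $\Psi(\thb)$ are "close" as projections; one estimates $\langle\nabla f(\thb),\thb-\hat\thb\rangle$ by $\langle\nabla f(\thb),\thb-\Psi(\thb)\rangle=\delta(\thb)$ plus a cross term controlled by Cauchy--Schwarz as $\beta_1(\thb)\sqrt{\delta(\thb)}\,\|\thb-\hat\thb\|$, using that the surrogate's optimality gap relates $\|\Psi(\thb)-\hat\thb\|$ to $\sqrt{\delta(\thb)}$ via the positive-definite metric (whose smallest eigenvalue, bounded below locally by a continuous function, produces $\beta_1$). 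The $\max$ in the statement covers the regime where $\|\thb-\hat\thb\|$ is small and the linear term $2\delta(\thb)$ dominates. Making the dependence of $\beta_1,\beta_2$ on $\thb$ genuinely continuous — tracking the local Hessian eigenvalue bounds and the PAVA-projection constants uniformly on compacta — is where the bookkeeping is heaviest, but no new idea beyond continuity of finitely many exponential sums in $\thb$ is needed.
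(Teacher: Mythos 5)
Your overall strategy — recognize $\Psi(\thb)$ as the constrained minimizer of a quadratic surrogate with the same gradient as $f$ at $\thb$, then exploit convexity — is exactly the paper's, and most of your sketch tracks the paper's argument: the paper also establishes that $\Psi(\thb) = \argmin_{\x\in\Theta}\bigl(f(\thb) + \langle\nabla f(\thb),\x-\thb\rangle + 2^{-1}\|\A_{\thb}(\x - \thb)\|^2\bigr)$ with $\A_{\thb}$ built from $T$ and the diagonal Hessian weights, and then invokes a dedicated abstract result (Lemma~\ref{Lem:Quadratic.proxy}) that packages continuity of the minimizer, the fixed-point property, $\delta(\thb)>0$, and both quantitative bounds. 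Your treatment of the fixed point and of $\delta>0$ via shared variational inequalities is equivalent, and your lower bound on the maximal decrease along the segment (Taylor expand, bound the second derivative in terms of the largest Hessian eigenvalue, optimize the one-dimensional quadratic in $t$) is the same mechanism as the paper's Lemma~\ref{Lem:Quadratic.proxy}~(iii).

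The genuine gap is in the second inequality, $f(\thb)-f(\hat\thb)\le\max\bigl(2\delta(\thb),\beta_1(\thb)\sqrt{\delta(\thb)}\|\thb-\hat\thb\|\bigr)$. You propose to write $\langle\nabla f(\thb),\thb-\hat\thb\rangle=\delta(\thb)+\langle\nabla f(\thb),\Psi(\thb)-\hat\thb\rangle$ and control the cross term by Cauchy--Schwarz, asserting that the surrogate's optimality gap bounds $\|\Psi(\thb)-\hat\thb\|$ by $\sqrt{\delta(\thb)}$ in the $\A_{\thb}$-metric. That last assertion is not established and is not what the surrogate controls: the first-order optimality of $\Psi(\thb)$ for the quadratic gives $\|\A_{\thb}(\thb-\Psi(\thb))\|^2\le\delta(\thb)$, i.e.\ it controls $\thb-\Psi(\thb)$, not $\Psi(\thb)-\hat\thb$. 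Moreover, even granting a bound of the proposed shape, $\delta+x$ is not bounded by $\max(2\delta,x)$ when $x>\delta$, so the estimate would not produce the stated $\max$ without an additional case analysis that you have not supplied. The paper's argument is different and does not use Cauchy--Schwarz here: writing $\hat\delta:=\langle\nabla f(\thb),\thb-\hat\thb\rangle$ and $\hat\gamma:=\|\A_{\thb}(\thb-\hat\thb)\|^2$, one uses $\delta(\thb)\ge f_o(\thb)-f_o(\Psi(\thb))=\max_{\x\in\Theta}\bigl(f_o(\thb)-f_o(\x)\bigr)\ge\max_{t\in[0,1]}\bigl(t\hat\delta-2^{-1}t^2\hat\gamma\bigr)$, since $\hat\thb\in\Theta$ is a feasible comparison point for the surrogate, and then a case split on whether $\hat\delta\ge\hat\gamma$ gives exactly $\hat\delta\le\max\bigl(2\delta(\thb),\sqrt{2\delta(\thb)\hat\gamma}\bigr)$; convexity of $f$ then bounds $f(\thb)-f(\hat\thb)$ by $\hat\delta$, and a norm comparison converts $\sqrt{\hat\gamma}$ into $\beta_1(\thb)\|\thb-\hat\thb\|$. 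You would need to replace your Cauchy--Schwarz heuristic with this feasible-comparison-point argument to close the proof.
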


In view of this lemma, we want to replace $\thb \ne \hat{\thb}$ with $(1 - t_*) \thb + t_* \Psi(\thb)$ for some suitable $t_* = t_*(\thb) \in [0,1]$ such that $f(\thb)$ really decreases. More specifically, with
\[
	\rho_{\thb}(t) \ := \ f(\thb) - f \bigl( (1 - t)\thb + t \Psi(\thb) \bigr) ,
\]
our goals are that for some constant $\kappa \in (0,1]$,
\[
	\rho_{\thb}(t_*) \ \ge \ \kappa \max_{t \in [0,1]} \rho_{\thb}(t) ,
\]
and in case of $\rho_{\thb}$ being (approximately) a quadratic function, $t_*$ should be (approximately) equal to $\argmax_{t \in [0,1]} \rho_{\thb}(t)$. For that, we proceed similarly as in \citet{Duembgen_2006}. We determine $t_o := 2^{-n_o}$ with $n_o$ the smallest integer such that $\rho_{\thb}(2^{-n_o}) \ge 0$. Then we define a Hermite interpolation of $\rho_{\bs\theta}$:
\[
	\tilde{\rho}_{\thb}(t)
	\ := \ \rho_{\thb}'(0) t - c_o t^2
	\quad\text{with}\quad
	c_o := t_o^{-1} \bigl( \rho_{\thb}'(0) - t_o^{-1}\rho_{\thb}(t_o) \bigr) \ > \ 0 .
\]
This new function is such that $\tilde{\rho}_{\thb}(t)=\rho_{\thb}(t)$ for $t = 0, t_o$, and $\tilde{\rho}_{\thb}'(0) = \rho_{\thb}'(0) > 0$. Since $\tilde{\rho}_{\thb}'(t) = \rho_{\thb}'(0) - 2 t c_o$, the maximizer of $\tilde{\rho}_{\thb}$ over $[0,t_o]$ is given by
\[
	t_* \ := \ \min \bigl( t_o, 2^{-1} \rho_{\thb}'(0)/c_o \bigr) .
\]
As shown in Lemma~1 of \citet{Duembgen_2006}, this choice of $t_*$ fulfils the requirements just stated, where $\kappa = 1/4$.

\subsection{Complete algorithms}
%
A possible starting point for the algorithm is given by $\thb^{(0)} := ( - \log(\#\PP) )_{(j,k) \in \PP}$, but any other parameter $\thb^{(0)} \in \Theta$ would work, too. Suppose we have determined already $\thb^{(0)}, \ldots, \thb^{(s)}$ such that $f(\thb^{(0)}) \ge \cdots \ge f(\thb^{(s)})$. Let $\Psi(\thb^{(s)})$ be a new proposal with $\Psi = \Psi^{\rm row}$ or $\Psi = \Psi^{\rm col}$, and let $\thb^{(s+1)} = (1 - t_*^{(s)}) \thb^{(s)} + t_*^{(s)} \Psi(\thb^{(s)})$ with $t_*^{(s)} = t_*(\thb^{(s)}) \in [0,1]$ as described before. No matter which proposal function $\Psi$ we are using in each step, the resulting sequence $(\thb^{(s)})_{s\ge 0}$ will always converge to $\hat{\thb}$.

\begin{Theorem}
\label{Thm:Convergence}
Let $(\thb^{(s)})_{s \ge 0}$ be the sequence just described. Then $\lim_{s \to \infty} \thb^{(s)} = \hat{\thb}$.
\end{Theorem}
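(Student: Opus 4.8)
The plan is to run the classical convergence argument for monotone descent schemes: confine the iterates to a compact set, show the stationarity gap $\delta(\thb^{(s)})$ tends to $0$, deduce $f(\thb^{(s)}) \to f(\hat{\thb})$, and finish with a subsequence argument using uniqueness of $\hat{\thb}$. All of this rests on Lemma~\ref{Lem:Properties.Psi} together with the line-search property $\rho_{\thb^{(s)}}(t_*^{(s)}) \ge \kappa \max_{t \in [0,1]} \rho_{\thb^{(s)}}(t)$ with $\kappa = 1/4$; in particular, since $\max_{t}\rho_{\thb^{(s)}}(t) \ge \rho_{\thb^{(s)}}(0) = 0$, we have $f(\thb^{(s+1)}) = f(\thb^{(s)}) - \rho_{\thb^{(s)}}(t_*^{(s)}) \le f(\thb^{(s)})$, so the sequence $(f(\thb^{(s)}))_s$ is non-increasing. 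As $\Psi$ maps $\Theta$ into $\Theta$ and $\Theta$ is convex, an induction shows $\thb^{(s)} \in \Theta$ for every $s$, hence all iterates lie in the sublevel set $\SS := \{\thb \in \Theta : f(\thb) \le f(\thb^{(0)})\}$. The only genuinely non-routine ingredient is that $\SS$ is compact; this is the coercivity-type fact behind the existence half of Theorem~\ref{Thm:Existence.uniqueness} (the inequalities \eqref{ineq:ELL2} defining $\Theta$ prevent the coordinates $\theta_{jk}$ with $w_{jk} = 0$ from escaping to $-\infty$), and I would simply invoke it. Since $\SS$ is compact and $\Psi$ continuous, the iterates $\thb^{(s)}$ and their proposals $\Psi(\thb^{(s)})$ stay bounded, $\hat{\thb} \in \SS$, and the continuous functions $\beta_1, \beta_2$ are bounded along the iterates; so there are constants $B, D \in (0,\infty)$ with $\beta_1(\thb^{(s)}), \beta_2(\thb^{(s)}) \le B$ and $\|\thb^{(s)} - \Psi(\thb^{(s)})\|, \|\thb^{(s)} - \hat{\thb}\| \le D$ for all $s$.

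Second, I would prove $\delta(\thb^{(s)}) \to 0$. If $\thb^{(s)} = \hat{\thb}$ for some $s$, the iteration is stationary (because $\Psi(\hat{\thb}) = \hat{\thb}$) and there is nothing left to show, so assume $\thb^{(s)} \ne \hat{\thb}$ throughout. Monotonicity and the lower bound $f(\thb^{(s)}) \ge f(\hat{\thb})$ make $\sum_{s \ge 0}\bigl(f(\thb^{(s)}) - f(\thb^{(s+1)})\bigr)$ finite, so $f(\thb^{(s)}) - f(\thb^{(s+1)}) \to 0$. Combining the line-search property with the lower bound on $\max_{t \in [0,1]} \rho_{\thb}(t)$ from Lemma~\ref{Lem:Properties.Psi},
\[
	f(\thb^{(s)}) - f(\thb^{(s+1)})
	\ = \ \rho_{\thb^{(s)}}(t_*^{(s)})
	\ \ge \ \tfrac14 \min\Bigl( \tfrac12 \delta(\thb^{(s)}),\,
		\frac{\delta(\thb^{(s)})^2}{\beta_2(\thb^{(s)})\,\|\thb^{(s)} - \Psi(\thb^{(s)})\|^2} \Bigr)
	\ \ge \ \tfrac14 \min\Bigl( \tfrac12 \delta(\thb^{(s)}),\, \frac{\delta(\thb^{(s)})^2}{B D^2} \Bigr) ,
\]
and since the left-hand side goes to $0$, this elementary inequality forces $\delta(\thb^{(s)}) \to 0$.

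Third, I would turn this into convergence of the objective values. The upper bound on $f(\thb^{(s)}) - f(\hat{\thb})$ in Lemma~\ref{Lem:Properties.Psi} gives
\[
	0 \ \le \ f(\thb^{(s)}) - f(\hat{\thb})
	\ \le \ \max\bigl( 2\delta(\thb^{(s)}),\, \beta_1(\thb^{(s)}) \sqrt{\delta(\thb^{(s)})}\, \|\thb^{(s)} - \hat{\thb}\| \bigr)
	\ \le \ \max\bigl( 2\delta(\thb^{(s)}),\, B D \sqrt{\delta(\thb^{(s)})} \bigr)
	\ \longrightarrow \ 0 ,
\]
so $f(\thb^{(s)}) \to f(\hat{\thb})$. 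Now the subsequence argument: $(\thb^{(s)})_s$ lies in the compact set $\SS \subset \Theta$, so any subsequence has a further subsequence converging to some $\thb^\star \in \Theta$; continuity of $f$ then gives $f(\thb^\star) = f(\hat{\thb})$, and since $\hat{\thb}$ is the unique minimizer of $f$ over $\Theta$ (Theorem~\ref{Thm:Existence.uniqueness}), $\thb^\star = \hat{\thb}$. Every subsequential limit of the bounded sequence $(\thb^{(s)})_s$ thus equals $\hat{\thb}$, which is equivalent to $\thb^{(s)} \to \hat{\thb}$.

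I expect the main obstacle to be the compactness of $\SS$: $f$ fails to be coercive on $\R^{\PP}$ because each coordinate $\theta_{jk}$ with $w_{jk} = 0$ enters $f$ only through the bounded term $n\exp(\theta_{jk})$, so one must exploit the geometry of the cone $\Theta$ to keep those coordinates bounded below on $\SS$. This is exactly the technical point already handled in the proof of Theorem~\ref{Thm:Existence.uniqueness}, and once it is granted, the remainder is a routine assembly of the monotone-decrease property and the two quantitative bounds of Lemma~\ref{Lem:Properties.Psi}.
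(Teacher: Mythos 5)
Your proof is correct, but it takes a visibly different route from the paper's. The paper never uses the $\beta_1$-inequality (the upper bound on $f(\thb)-f(\hat{\thb})$) from Lemma~\ref{Lem:Properties.Psi}; instead it packages the per-step decrease into a single continuous function $\beta:\Theta\to[0,\infty)$ with $\beta>0$ off $\hat{\thb}$, and then runs a covering argument on the compact sublevel set $R_0$: around every $\thb\ne\hat{\thb}$ there is a ball that can contain at most one iterate (since entering it forces the objective to drop below $f(\thb)-\beta(\thb)/3$), so for every $\epsilon>0$ the compact set $\{\thb\in R_0:\|\thb-\hat{\thb}\|\ge\epsilon\}$ is met only finitely often. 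You instead make the descent quantitative: telescoping plus the line-search guarantee $\kappa=1/4$ and the $\beta_2$-lower bound force $\delta(\thb^{(s)})\to 0$, the $\beta_1$-upper bound then yields $f(\thb^{(s)})\to f(\hat{\thb})$, and compactness together with the uniqueness of $\hat{\thb}$ (Theorem~\ref{Thm:Existence.uniqueness}) closes the argument via subsequences. Both approaches lean on the same compactness of $R_0=\SS$ (inherited from the coercivity shown in the proof of Theorem~\ref{Thm:Existence.uniqueness}). Your route is the more standard ``stationarity-gap $\to 0$, then objective gap $\to 0$, then subsequential uniqueness'' template and is a bit more transparent; the paper's route is leaner in that it only needs the lower-bound half of Lemma~\ref{Lem:Properties.Psi}, at the price of a slightly less explicit covering argument and the (tacit) claim that the assembled bound $\beta$ is genuinely continuous on all of $\Theta$ including at $\hat{\thb}$. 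One small tightening in your write-up: ``$\beta_1,\beta_2$ are bounded along the iterates'' should really be ``bounded on $\SS$,'' which follows from continuity and compactness and is what you in fact use; likewise $\|\thb^{(s)}-\Psi(\thb^{(s)})\|$ is bounded because $\Psi$ is continuous and $\SS$ compact, not merely because the iterates are bounded.
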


Our numerical experiments showed that a particularly efficient refinement is as follows: Before computing a new proposal $\Psi(\thb^{(s)})$, one should calibrate $\thb^{(s)}$ in the sense that it is row-wise and column-wise calibrated. If $s$ is even, we compute $\Psi^{\rm row}(\thb^{(s)})$ to determine the next candidate $\thb^{(s+1)}$. If $s$ is odd, we compute $\Psi^{\rm col}(\thb^{(s)})$ to obtain $\thb^{(s+1)}$. The algorithm stops as soon as $\delta(\thb^{(s)}) = \bigl\langle \nabla f(\thb^{(s)}), \thb^{(s)} - \Psi(\thb^{(s)}) \bigr\rangle$ is smaller than a prescribed small threshold. Table~\ref{Tab:Algorithm} provides corresponding pseudo code.

\begin{table}
\[
	\begin{array}{|l|}
	\hline
	\thb \leftarrow \thb^{(0)} \\
	\delta \leftarrow \infty\\
	s \leftarrow 0\\
	\text{while} \ \delta \ge \delta_o \ \text{do}\\
	\strut\quad
			\thb \leftarrow \text{calibration of} \ \thb\\
	\strut\quad
		\text{if} \ s \ \text{is even, do}\\
	\strut\quad\quad
			(\bs{\psi},\delta) \leftarrow
			\bigl( \Psi^{\rm row}(\thb),
				\bigl\langle \nabla f(\thb),
					\thb - \Psi^{\rm row}(\thb) \bigr\rangle \bigr)\\
	\strut\quad
		\text{else}\\
	\strut\quad\quad
			(\bs{\psi},\delta) \leftarrow
			\bigl( \Psi^{\rm col}(\thb),
				\bigl\langle \nabla f(\thb),
					\thb - \Psi^{\rm col}(\thb) \bigr\rangle \bigr)\\
	\strut\quad
		\text{end if}\\
	\strut\quad
		\rho' \leftarrow \delta\\	
	\strut\quad
		\text{while} \ f(\bs{\psi}) > f(\thb) \ \text{do}\\
	\strut\quad\quad
			(\bs{\psi},\rho') \leftarrow
				\bigl( 2^{-1}(\thb + \bs{\psi}), 2^{-1} \rho' \bigr)\\
	\strut\quad
		\text{end while}\\
	\strut\quad
		t_* \leftarrow
			\min \bigl( 1, 2^{-1} \rho'/
				\bigl(\rho' - f(\thb) + f(\bs{\psi}) \bigr) \bigr)\\
	\strut\quad
		\thb \leftarrow
			(1 - t_*) \thb + t_* \bs{\psi}\\
	\strut\quad
		s \leftarrow s+1\\
	\text{end while}\\
	\hline
	\end{array}
\]
\caption{Pseudo code of our algorithm, returning an approximation $\thb$ of $\hat{\thb}$.}
\label{Tab:Algorithm}
\end{table}

\section{Simulation study}
\label{Sec:Simulation.study}
%
In this section, we compare estimation and prediction performances of the likelihood ratio order constrained estimator presented in this article with the estimator under usual stochastic order obtained via isotonic distributional regression. The latter estimator was mentioned briefly in the introduction. It is extensively discussed in \citet{Henzi_Ziegel_Gneiting_2021} and \citet{Moesching_2020}.

\subsection{A Gamma model}
%
We choose a parametric family of distributions from which we draw observations. We will then use these data to provide distribution estimates which we then compare with the truth. The specific model we have in mind is a family $(Q_x)_{x\in\mathfrak{X}}$ of Gamma distributions with densities
\[
	g_x(y)
	\ := \
	\frac{b(x)^{-a(x)}}{\Gamma\bigl(a(x)\bigr)} y^{a(x)-1} \exp\bigl(-y/b(x)\bigr),
\]
with respect to Lebesgue measure on $(0,\infty)$, with some shape function $a:\mathfrak{X}\to (0,\infty)$ and scale function $b:\mathfrak{X}\to (0,\infty)$. Then $Q_x$ is isotonic in $x \in \mathfrak{X}$ with respect to likelihood ratio ordering if and only if both functions $a$ and $b$ are isotonic. Recall that since the family is increasing in likelihood ratio order, it is also increasing with respect to the usual stochastic order.

\begin{figure}
\centering
\includegraphics[width=0.8\textwidth]{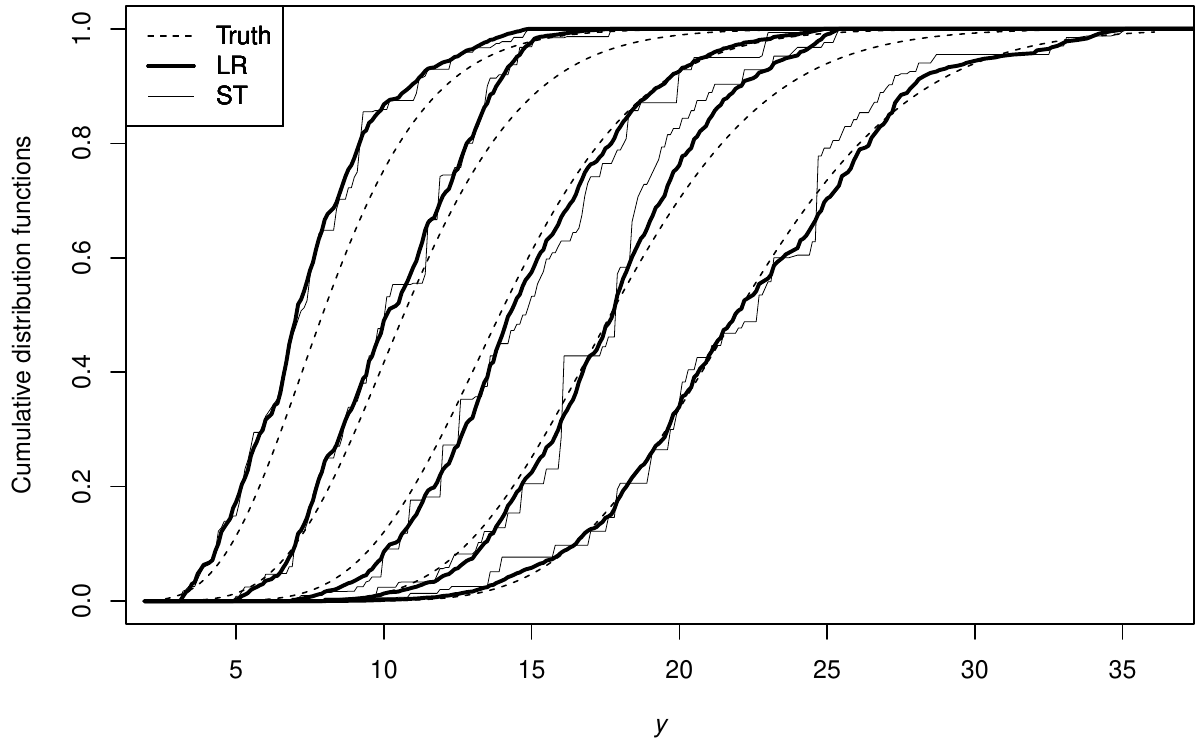}
\caption{The true conditional Gamma distribution function $G_x$, the estimate under likelihood ratio (LR) order constraint $\hat{G}_x$ and the estimated under usual stochastic (ST) order constraint $\check{G}_x$ are displayed from left to right for $x\in\{1.5,2,2.5,3,3.5\}$.}
\label{Fig:Gamma_CDFs}
\end{figure}

The specific shape and scale functions used for this study are
\[
	a(x) \ := \ 2+(x+1)^2
	\quad
	\text{and}
	\quad
	b(x) \ := \ 1 - \exp(-10x),
\]
defined for $x\in\mathfrak{X}:=[1,4]$. Figure~\ref{Fig:Gamma_CDFs} displays corresponding true conditional distribution functions for a selection of $x$'s.

\subsection{Sampling method}
\label{Sec:Sampling}
%
Let $\ell_o\in\{50,1000\}$ be a predefined number and let
\[
	\mathfrak{X}_o
	\ := \
	1 + \frac{3}{\ell_o} \cdot \{1,2,\ldots, \ell_o\}
	\ \subset \
	\mathfrak{X}.
\]
For a given sample size $n\in\N$, the sample $(X_1,Y_1),(X_2,Y_2),\ldots,(X_n,Y_n)$ is obtained as follows: Draw $X_1,X_2,\ldots,X_n$ uniformly from $\mathfrak{X}_o$ and sample independently each $Y_k$ from~$Q_{X_k}$. This yields unique covariates $x_1<\cdots<x_\ell$ as well as unique responses $y_1 < \cdots < y_m$, for some $1\le \ell,m \le n$.

For each such sample, we compute estimates of $(Q_{x_j})_{j=1}^\ell$ under likelihood ratio order and usual stochastic order constraints. Using linear interpolation, we complete both families of estimates with covariates originally in $\{x_j\}_{j=1}^\ell$ to families of estimates with covariates in the full set $\mathfrak{X}_o$, see Lemma~\ref{Lem:Lin.interpolation.lr}. We therefore obtain estimates $(\hat{Q}_{x})_{x\in\mathfrak{X}_o}$ and $(\check{Q}_{x})_{x\in\mathfrak{X}_o}$ under likelihood ratio order and usual stochastic order constraint, respectively. The corresponding families of cumulative distribution functions are written $(\hat{G}_x)_{x\in\mathfrak{X}_o}$ and $(\check{G}_x)_{x\in\mathfrak{X}_o}$, whereas the truth is denoted by $(G_x)_{x\in\mathfrak{X}_o}$. Although the performance of the empirical distribution is worse than those of the two order constrained estimators, it is still useful to study its behaviour, for instance to better understand boundary effects. The family of empirical cumulative distribution functions will be written $(\hat{\mathbb{G}}_x)_{x\in\mathfrak{X}_o}$.

\subsection{Single sample}
%
\begin{figure}
\centering
\includegraphics[width=0.8\textwidth]{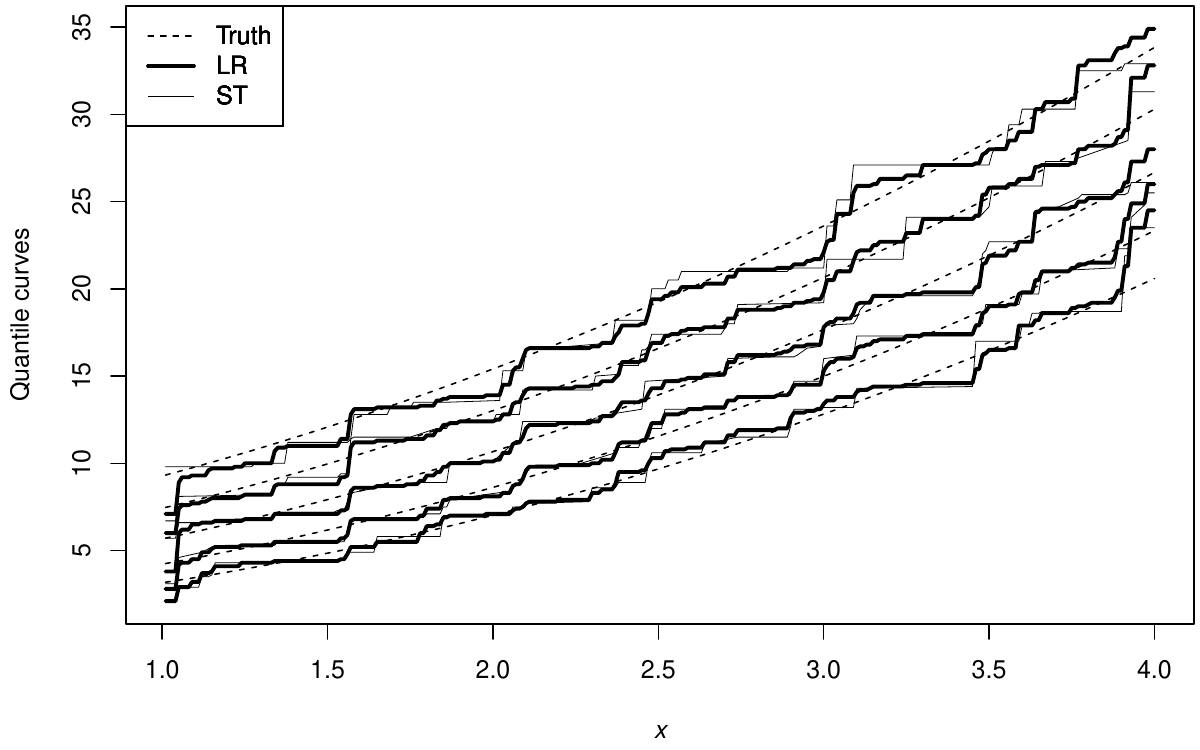}
\caption{Selection of $\beta$-quantile curves. Specifically, a taut-string \citep{Duembgen_Kovac_2009} is computed between the lower $\mathfrak{X}\ni x\mapsto \min\{y\in\R: \tilde{G}_x(y)\ge \beta\}$ and upper $\mathfrak{X}\ni x\mapsto \inf\{y\in\R: \tilde{G}_x(y)> \beta\}$ quantile curves for each $\tilde{G}\in\{G,\hat{G},\check{G}\}$ (corresponding respectively to `Truth', `LR' and `ST') and $\beta\in\{0.1,0.25,0.5,0.75,0.9\}$.}
\label{Fig:Gamma_Quantiles}
\end{figure}

Figure~\ref{Fig:Gamma_CDFs} provides a visual comparison of a selection of true conditional distribution functions with their corresponding estimates under order constraint for a single sample generated in the setting $\ell_o=1000$ and $n=1000$. It shows that the estimates under likelihood ratio order constraint are much smoother than those under usual stochastic order constraint. The former are in general also closer to the truth than the latter. This fact is in reality true on average, as demonstrated in the next paragraph. Smoothness and greater precision in estimation resulting from the likelihood ratio order is also apparent in Figure~\ref{Fig:Gamma_Quantiles}, which displays a selection of quantile curves for each $\tilde{G}\in\{G,\hat{G},\check{G}\}$.

\subsection{A simple score}
%
To assess the ability of each estimator to retrieve the truth, we produce Monte-Carlo estimates of the median of the score
\[
	R_x(\tilde{G}, G)
	\ := \
	\int \lvert \tilde{G}_x(y) - G_x(y) \rvert \, \mathrm{d}Q_x(y),
\]
for each estimator $\tilde{G}\in\{\hat{G},\check{G},\hat{\mathbb{G}}\}$ and for each $x\in\mathfrak{X}_o$. The above score may be decomposed as a sum of simple expressions involving the evaluation of $\tilde{G}_x$ and $G_x$ on the finite set of unique responses, see Section~\ref{Sec:RewritingScores}. We also compute Monte-Carlo quartiles of the relative change in score
\[
	100\cdot \frac{R_x(\hat{G}, G)-R_x(\check{G}, G)}{R_x(\check{G}, G)}.
\]

\begin{figure}
\centering
\includegraphics[width=0.8\textwidth]{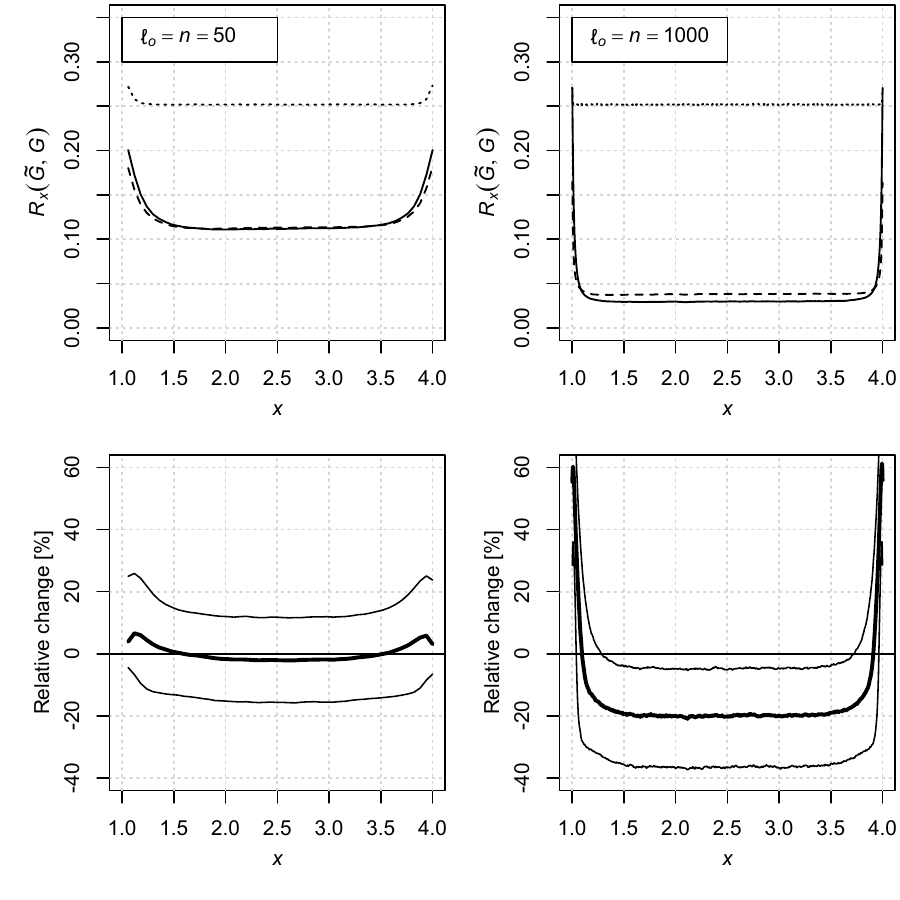}
\caption{Monte Carlo simulations to evaluate estimation performances with a simple score. First row: Simple scores with $\tilde{G}$ being either $\hat{G}$ (solid line), $\check{G}$ (dashed line) or $\hat{\mathbb{G}}$ (dotted line). Second row: Relative change of score when enforcing a likelihood ratio order constraint over the usual stochastic order constraint. The thicker line is the median variation, whereas the thin lines are the first and third quartiles. Negative values represent an improvement in score.}
\label{Fig:Gamma_SS}
\end{figure}

The results of the simulations are displayed in Figure~\ref{Fig:Gamma_SS}. A first observation is that the performance of all three estimators decreases towards the boundary points of $\mathfrak{X}$, and this effect is more pronounced for the two order constrained estimators. This is a known phenomenon from shape constrained inference. However, in the interior of $\mathfrak{X}$, taking the stochastic ordering into account pays off. The second row of plots in Figure~\ref{Fig:Gamma_SS} shows the relative change in score when estimating the family of distributions with a likelihood ratio order constraint instead of the usual stochastic order constraint. It is observed that the improvement in score becomes larger and occurs on a wider sub-interval of $\mathfrak{X}$ as $\ell_o$ and $n$ increase. Only towards the boundary, the usual stochastic order seems to have better performance.

\subsection{Theoretical predictive performances}
%
Using the same Gamma model, we evaluate predictive performances of both estimators using the continuous ranked probability score
\[
	\mathrm{CRPS}(\tilde{G}_x, y)
	\ := \
	\int \left(\tilde{G}_x(z) - 1_{[y\le z]}\right)^2 \, \mathrm{d}z.
\]
The CRPS is a sctrictly proper scoring rule which allows for comparisons of probabilistic forecasts, see \cite{Gneiting_2007} and \cite{Jordan_2019}. It can be seen as an extension of the mean absolute error for probabilistic forecasts. The CRPS is therefore interpreted in the same unit of measurement as the true distribution or data.

Because the true underlying distribution is known in the present simulation setting, the expected CRPS score is given by
\begin{align*}
	S_x(\tilde{G}, G)
	\ :=& \
	\int \mathrm{CRPS}(\tilde{G}_x, y) \, \mathrm{d}Q_x(y) \\
	\ =& \
	\sum_{k=0}^m \int_{[y_k,y_{k+1})}
		\bigl(
			\tilde{G}_x(y_k) - G_x(y)
		\bigr)^2 \,\mathrm{d}y
	+ \frac{b(x)}{B(1/2,a(x))},
\end{align*}
where $y_0:=0$, $y_{m+1}:=+\infty$ and $B(\cdot,\cdot)$ is the beta function. As shown in Section~\ref{Sec:RewritingScores}, the above sum of integrals may be rewritten as a sum of elementary expressions involving the evaluation of $\tilde{G}_{x}$ and $G_{x}$ on the finite set of unique responses, as well as two simple integrals which are computed via numerical integration. Consequently, we compute Monte-Carlo estimates of the median of each score $S_x(\tilde{G}, G)$, $\tilde{G}\in\{\hat{G},\check{G},\hat{\mathbb{G}}\}$, as well as estimates of quartiles of the relative change in score when choosing $\hat{G}$ over $\check{G}$.

\begin{figure}
\centering
\includegraphics[width=0.8\textwidth]{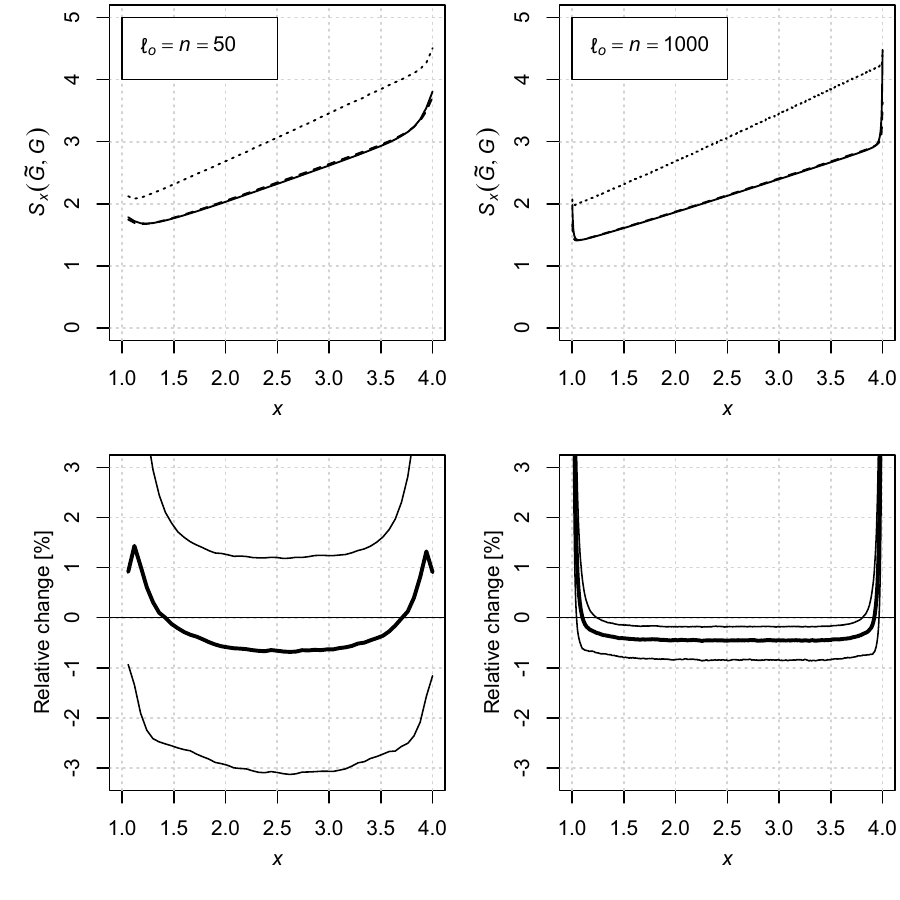}
\caption{Monte Carlo simulations to evaluate prediction performances using a CRPS-type score. First row: CRPS scores with $\tilde{G}$ being either $\hat{G}$ (solid line), $\check{G}$ (dashed line) or $\hat{\mathbb{G}}$ (doted line). Second row: Relative change of score when enforcing a likelihood ratio order constraint over the usual stochastic order constraint.}
\label{Fig:Gamma_CRPS}
\end{figure}

Figure~\ref{Fig:Gamma_CRPS} outlines the results of the simulations. Similar boundary effects as for the simple score are observed. On the interior of $\mathfrak{X}$, the usual stochastic order improves the naive empirical estimator, and the likelihood ratio order yields the best results. In terms of relative change in score, it appears that imposing a likelihood ratio order constraint to estimate the family of distributions yields an average score reduction of about $0.5\%$ in comparison with the usual stochastic order estimator for a sample of $n=50$. For $n=1000$, this improvement occurs on a wider subinterval of $\mathfrak{X}$ and more frequently, as shown by the third quartile curve. Note further that the expected CRPS increases on the interior of $\mathfrak{X}$. This is due to the fact that the CRPS has the same unit of measurement as the response variable. Since the scale of the response characterized by $b$ increases with $x$, then so does the corresponding score.

\subsection{Empirical predictive performances}
%
\begin{figure}
\centering
\includegraphics[width=0.8\textwidth]{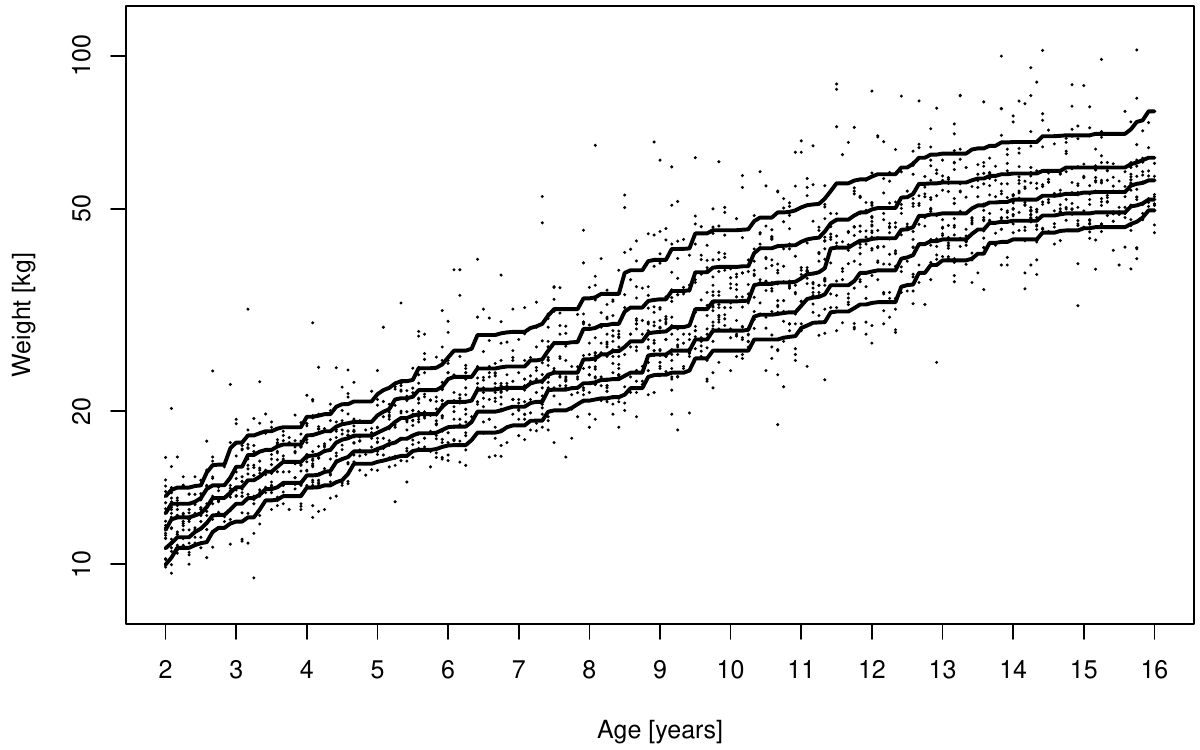}
\caption{Subsample of the weight for age data and $\beta$-quantile curves computed from that sample under likelihood ratio order constraint, $\beta\in\{0.1,0.25,0.5,0.75,0.9\}$. A logarithmic scale was used for the weight variable.}
\label{Fig:Growth_Sample}
\end{figure}

We use the weight for age dataset already studied in \cite{Moesching_2020}. It comprises the age and weight of $n=16\,432$ girls whose age in years lies within $\mathfrak{X}:=[2,16]$. A subsample of these data of size $2\,000$ is presented in Figure~\ref{Fig:Growth_Sample}, along with estimated quantile curves under likelihood ratio order using that subsample. The dataset was publicly released as part of the National Health and Nutrition Examination Survey conducted in the US between 1963 and 1991 (data available from \url{www.cdc.gov}) and was analyzed by \cite{CDC_2002} with parametric models to produce smooth quantile curves.

Although the likelihood ratio order constraint is harder to justify than the very natural stochastic order constraint, we are interested in the effect of a stronger regularization imposed by the former constraint.

The forecast evaluation is performed using a leave-$n_{\text{train}}$-out cross-validation scheme. More precisely, we choose random subsets $\DD_{\text{train}}$ of $n_{\text{train}}$ observations which we use to train our estimators. Using the rest of the $n_{\text{test}}:=n-n_{\text{train}}$ data pairs in $\DD_{\text{test}}$, we evaluate predictive performance by computing the sample median of $\hat{S}_x(\tilde{G},\DD_{\text{test}})$ for each estimator $\tilde{G}\in\{\hat{G},\check{G},\hat{\mathbb{G}}\}$ and each $x\in\mathfrak{X}_o$, where
\[
	\hat{S}_x(\tilde{G},\DD_{\text{test}})
	\ := \
	\frac{\sum_{(X,Y)\in\DD_{\text{test}}:X=x}
	\mathrm{CRPS}(\tilde{G}_x, Y)}{\#\{(X,Y)\in\DD_{\text{test}}:X=x\}}.
\]
Quartile estimates of the relative change in score are also computed.

\begin{figure}
\centering
\includegraphics[width=0.8\textwidth]{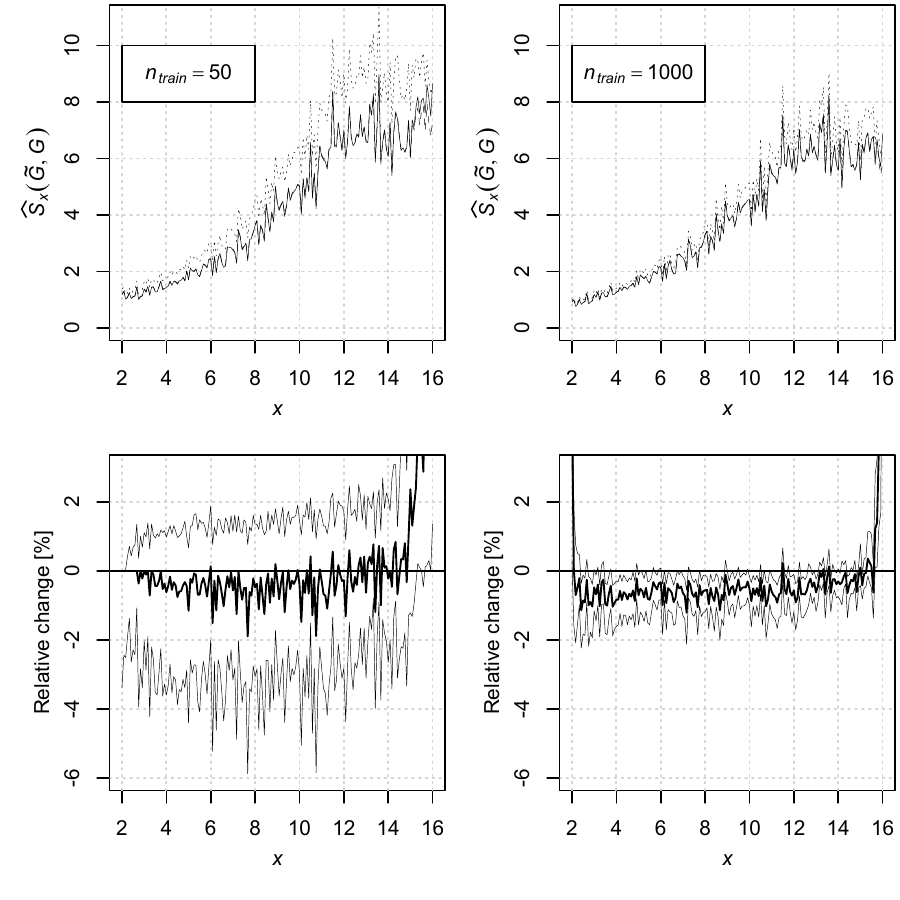}
\caption{Monte Carlo simulations to evaluate prediction performances using an empirical CRPS score. First row: empirical CRPS scores with $\tilde{G}$ being either $\hat{G}$ (solid line), $\check{G}$ (dashed line, hardly distinguishable from solid line) or $\hat{\mathbb{G}}$ (dotted line). Second row: Relative change of score when enforcing a likelihood ratio order constraint over the usual stochastic order constraint.}
\label{Fig:Growth_CRPS}
\end{figure}

Figure~\ref{Fig:Growth_CRPS} shows the forecast evaluation results. As expected, the empirical CRPS increases with age, since the spread of the weight increases with age. As to the relative change in score, improvements of about $0.5\%$ can be seen for both training sample sizes. The region of $\mathfrak{X}$ where the estimator under likelihood ratio order constraint shows better predictive performances is the widest for the largest training sample size. These results show the benefit of a stronger regularization.


\section*{Code availability}
Our procedure is implemented in the \proglang{R}-package \pkg{LRDistReg} and is available from the \proglang{GitHub} of the first author: \url{https://github.com/AlexandreMoesching/LRDistReg}. Its implementation includes \proglang{C++} code which is then integrated in \proglang{R} using \pkg{Rcpp}.


%
\appendix

\section{Proofs and technical details}
\label{App:Estimation}
%
\subsection{Proofs for Sections~\ref{Sec:EL} and \ref{Sec:Estimation}}
%
\begin{Lemma}
\label{Lem:Lin.interpolation.lr}
Let $Q_0$ and $Q_1$ be probability distributions on $\R$ such that $Q_0 \lelr Q_1$. If we define $Q_t := (1 - t) Q_0 + t Q_1$ for $0 < t < 1$, then $Q_s \lelr Q_t$ for $0 \le s < t \le 1$.
\end{Lemma}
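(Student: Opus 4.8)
The plan is to check property~(iii) of Proposition~\ref{Prop:EquivDef} for the pair $(Q_s, Q_t)$, using that this property already holds for $(Q_0, Q_1)$. The point of working with the set-based characterization rather than with densities is that the map $(P,R) \mapsto P(B)\,R(A)$ is separately affine in $P$ and in $R$, so it behaves well under mixtures.

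First I would fix Borel sets $A, B$ with $A \le B$ elementwise and write $a_i := Q_i(A)$, $b_i := Q_i(B)$ for $i \in \{0,1\}$, where $Q_0$ and $Q_1$ denote the two given distributions (consistently with $Q_r = (1-r)Q_0 + r Q_1$, which is also valid at $r = 0, 1$). Then $Q_r(A) = (1-r)a_0 + r a_1$ and $Q_r(B) = (1-r)b_0 + r b_1$ for every $r \in [0,1]$, and expanding the two relevant products and cancelling the common $a_0 b_0$- and $a_1 b_1$-terms gives the identity
\[
	Q_s(A)\,Q_t(B) - Q_s(B)\,Q_t(A)
	\ = \ \bigl[ (1-s)t - s(1-t) \bigr]\,(a_0 b_1 - a_1 b_0)
	\ = \ (t - s)\,(a_0 b_1 - a_1 b_0) .
\]
By property~(iii) of Proposition~\ref{Prop:EquivDef} applied to $Q_0 \lelr Q_1$ and to the sets $A \le B$, we have $Q_0(B)\,Q_1(A) \le Q_0(A)\,Q_1(B)$, i.e.\ $a_0 b_1 - a_1 b_0 \ge 0$; since $t - s > 0$ as well, the right-hand side above is nonnegative. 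As $A \le B$ were arbitrary Borel sets, $(Q_s, Q_t)$ satisfies property~(iii), and therefore $Q_s \lelr Q_t$ by Definition~\ref{Def:LR}. (If one prefers, it suffices to run the same computation only for $A = (x,y]$ and $B = (y,z]$ with $x < y < z$ and invoke property~(iv) instead.)

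I do not expect any real obstacle here: the entire argument is the one-line bilinear identity above, and the only thing to get right is to phrase everything through the measure-free characterization~(iii)/(iv) so that no common dominating measure has to be introduced. A density-based proof via property~(i) would be feasible but clumsier: one would pick $\mu$ dominating $Q_0$ and $Q_1$ with densities $f_0, f_1$, and would then have to show that $(1-s)f_0 + s f_1$ and $(1-t)f_0 + t f_1$ have isotonic ratio on $\{f_0 + f_1 > 0\}$, which after clearing denominators again reduces to $(t-s)\bigl(f_0(x) f_1(y) - f_1(x) f_0(y)\bigr) \ge 0$.
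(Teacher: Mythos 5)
Your proof is correct, and it takes a genuinely different route from the paper's. You verify property~(iii) of Proposition~\ref{Prop:EquivDef} directly via the bilinear identity
\[
	Q_s(A)\,Q_t(B) - Q_s(B)\,Q_t(A) \ = \ (t-s)\bigl(Q_0(A) Q_1(B) - Q_0(B) Q_1(A)\bigr),
\]
which is entirely measure-free: no dominating measure, no densities, no Radon--Nikodym. The paper instead proves the lemma through densities with respect to the canonical dominating measure $Q_0 + Q_1$: it takes $g_0, g_1$ with $g_0 + g_1 \equiv 1$ and $g_1$ isotonic, writes $g_t = 1 - t + (2t-1) g_1$, and arrives at the analogous identity $g_s(x) g_t(y) - g_s(y) g_t(x) = (t-s)\bigl(g_1(y) - g_1(x)\bigr)$. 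So both proofs hinge on the same one-line bilinearity computation; the structural difference is that the paper expresses it at the level of densities (using property~(i)/(ii)) while you express it at the level of set probabilities (property~(iii)). The density-based route is not actually clumsier than you suggest, precisely because choosing $\mu = Q_0 + Q_1$ normalizes $g_0 + g_1 \equiv 1$ and collapses the computation; but your set-based version has the conceptual advantage of never invoking a dominating measure at all, which is in keeping with the paper's own emphasis in Remark~\ref{Rem:Weak.convergence1} on measure-free characterizations. Either way, the argument is complete.
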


\begin{proof}[\textbf{Proof}]
By assumption, there exist densitites $g_0$ of $Q_0$ and $g_1$ of $Q_1$ with respect to some dominating measure $\mu$ such that $g_1/g_0$ is isotonic on $\{g_0 + g_1 > 0\}$, and this is equivalent to the property that
\[
	g_0(y) g_1(x) \ \le \ g_0(x) g_1(y)
	\quad\text{whenever} \ x < y .
\]
Now, $Q_t$ has density $g_t := (1 - t) g_0 + t g_1$ with respect to $\mu$, and elementary algebra reveals that for $0 \le s < t \le 1$ and arbitrary $x < y$,
\[
	g_s(x) g_t(y) - g_s(y) g_t(x) \ = \ (t - s) \bigl( g_0(x) g_1(y) - g_0(y) g_1(x) \bigr)
	\ \ge \ 0 ,
\]
whence $Q_s \lelr Q_t$.
\end{proof}

\begin{proof}[\textbf{Proof of Lemma~\ref{Lem:Dim.reduction}}]
Let $\bs{h} \in [0,\infty)$ satisfy \eqref{ineq:EL} and $L(\bs{h}) > - \infty$.

As for part~(a), it follows from $L(\bs{h}) > - \infty$ that $h_{jk} > 0$ whenever $w_{jk} > 0$. We have to show that for arbitrary index pairs $(j_1,k_2), (j_2,k_1)$ with $j_1 \le j_2$, $k_1 \le k_2$ and $w_{j_1k_2}, w_{j_2k_1} > 0$, also $h_{jk} > 0$ for all $j \in \{j_1,\ldots,j_2\}$ and $k \in \{k_1,\ldots,k_2\}$.

Since $h_{j_1k_2}, h_{j_2k_1} > 0$, it follows from \eqref{ineq:EL} that $h_{j_1k_1}, h_{j_2k_2} > 0$, too. (If $j_1 = j_2$ or $k_1 = k_2$, this conclusion is trivial.) This type of argument will reappear several times, so we denote it by $A(j_1,j_2,k_1,k_2)$.

Next we show that $h_{jk_1}, h_{jk_2} > 0$ for $j_1 < j < j_2$. Indeed, there exists an index $k_*$ such that $w_{jk_*} > 0$, whence $h_{jk_*} > 0$. If $k_* \le k_2$, we may conclude from $A(j_1,j,k_*,k_2)$ that $h_{j,k_2} > 0$, and then it follows from $A(j,j_2,k_1,k_2)$ that $h_{jk_1} > 0$. Similarly, if $k_* \ge k_1$, we may conclude from $A(j,j_2,k_1,k_*)$ that $h_{jk_1} > 0$, and then $A(j_1,j,k_1,k_2)$ shows that $h_{jk_2} > 0$.

Analogously, one can show that $h_{j_1k}, h_{j_2k} > 0$ for $k_1 < k < k_2$.

Finally, if $j_1 < j < j_2$ and $k_1 < k < k_2$, then we may apply $A(j_1,j,k_1,k)$ or $A(j,j_2,k,k_2)$ to deduce that $h_{jk} > 0$.

As to part~(b), since $\PP$ contains all pairs $(j,k)$ with $w_{jk} > 0$, we know that $L_{\rm raw}(\tilde{\bs{h}}) = L_{\rm raw}(\bs{h})$, and $n - n \tilde{h}_{++} \ge n - n h_{++}$ with equality if and only if $\tilde{\bs{h}} = \bs{h}$. This proves the assertions about $L(\tilde{\bs{h}})$ and $L(\bs{h})$. That $\tilde{\bs{h}}$ inherits property~\eqref{ineq:EL} from $\bs{h}$ can be deduced from the fact that for indices $j_1 < j_2$ and $k_1 < k_2$, it follows from $\tilde{h}_{j_1k_2} \tilde{h}_{j_2k_1} > 0$, that $(j_1,k_2), (j_2,k_1) \in \PP$, so $(j_1,k_1), (j_2,k_2) \in \PP$ as well, and $\tilde{h}_{j_1k_1} \tilde{h}_{j_2k_2} - \tilde{h}_{j_1k_2} \tilde{h}_{j_2k_1}$ is identical to $h_{j_1k_1} h_{j_2k_2} - h_{j_1k_2} h_{j_2k_1} \ge 0$.

Concerning part~(c), we have to show that \eqref{ineq:EL2} implies \eqref{ineq:EL}. To this end, let $(j_1,k_2), (j_2,k_1) \in \PP$ with $j_1 < j_2$ and $k_1 < k_2$. Since $\{j_1,\ldots,j_2\} \times \{k_1,\ldots,k_2\} \subset \PP$, one can write
\[
	\frac{h_{j-1,k_1} \, h_{j,k_2}}{h_{j-1,k_2} \, h_{j,k_1}} \
	= \ \prod_{k=k_1+1}^{k_2}
		\frac{h_{j-1,k-1} \, h_{j,k}}{h_{j-1,k} \, h_{j,k-1}} \
	\ge \ 1
\]
for $j_1 < j \le j_2$, and
\[
	\frac{h_{j_1,k_1} \, h_{j_2,k_2}}{h_{j_1,k_2} \, h_{j_2,k_1}} \
	= \ \prod_{j=j_1+1}^{j_2}
		\frac{h_{j-1,k_1} \, h_{j,k_2}}{h_{j-1,k_2} \, h_{j,k_1}} \
	\ge \ 1 ,
\]
so \eqref{ineq:EL} is satisfied as well. 
\end{proof}

\begin{proof}[\textbf{Proof of Theorem~\ref{Thm:Existence.uniqueness}}]
Since $f$ is strictly convex and $\Theta$ is convex, $f$ has at most one minimizer in $\Theta$. To prove existence of a minimizer, it suffices to show that
\begin{equation}
\label{eq:coercivity}
	f(\thb) \ \to \ \infty \quad\text{as} \
	\thb \in \Theta, \|\thb\| \to \infty .
\end{equation}
Suppose that \eqref{eq:coercivity} is false. Then there exists a sequence $(\thb^{(s)})_s$ in $\Theta$ such that $\|\thb\| \to \infty$ but $\bigl( f(\thb^{(s)}) \bigr)_s$ is bounded. With $r_s := \|\thb^{(s)}\|$ and $\u^{(s)} := r_s^{-1} \thb^{(s)}$, we may assume without loss of generality that $\u^{(s)} \to \u$ as $s \to \infty$ for some $\u \in \Theta$ with $\|\u\| = 1$. For any fixed $t > 0$ and sufficiently large $s$, convexity and differentiablity of $f$ imply that
\begin{align*}
	f(\thb^{(s)}) \
	&= \ f(t \u^{(s)}) + \bigl( f(r_s \u^{(s)}) - f(t \u^{(s)}) \bigr) \\
	&\ge \ f(t \u^{(s)}) + (r_s - t) \partial f(t \u^{(s)}) / \partial t .
\end{align*}
Since $\lim_{s \to \infty} f(t \u^{(s)}) = f(t \u)$ and $\lim_{s \to \infty} \partial f(t \u^{(s)}) / \partial t = \partial f(t \u) / \partial t$, we conclude that
\[
	\partial f(t \u) / \partial t \ \le \ 0
	\quad\text{for all} \ t > 0 .
\]
But as $t \to \infty$, the directional derivative $\partial f(t \u) / \partial t = \sum_{(j,k) \in \PP} \bigl( - w_{jk} u_{jk} + u_{jk} \exp(t u_{jk}) \bigr)$ converges to
\[
	\begin{cases}
		\infty
			& \text{if} \ u_{jk} > 0 \ \text{for some} \ (j,k) \in \PP , \\
		\displaystyle
		- \sum_{(j,k) \in \PP} w_{jk} u_{jk}
			& \text{if} \ \u \in (-\infty,0]^{\PP} .
	\end{cases}
\]
Consequently, the limiting direction $\u$ lies in $\Theta \cap (-\infty,0]^{\PP}$ and satisfies $u_{jk} = 0$ whenever $w_{jk} > 0$. But as shown below, this implies that $\u = \bs{0}$, a contradiction to $\|\u\| = 1$.

The proof of $\bs{u} = \bs{0}$ is very similar to the proof of Lemma~\ref{Lem:Dim.reduction}. If $j_1 \le j_2$ and $k_1 \le k_2$ are indices such that $u_{j_1k_2} = u_{j_2k_1} = 0$, then it follows from $\u \in (-\infty,0]^{\PP}$ and \eqref{ineq:ELL2} that $u_{j_1k_1} + u_{j_2k_2} \ge 0$, whence $u_{j_1k_1} = u_{j_2k_2} = 0$. Repeating this argument as in the proof of Lemma~\ref{Lem:Dim.reduction}, one can show that for arbitrary $(j_1,k_2), (j_2,k_1) \in \PP$ with $j_1 \le j_2$, $k_1 \le k_2$, and $w_{j_1k_2}, w_{j_2,k_1} > 0$, we have $u_{jk} = 0$ for $j_1 \le j \le j_2$ and $k_1 \le k \le k_2$. By definition of $\PP$, this means that $\u = \bs{0}$.
\end{proof}

\begin{proof}[\textbf{Proof of Lemma~\ref{Lem:Properties.Psi}}]
With the linear bijection $T : \R^{\PP} \to \R^{\PP}$ and $\tilde{\Theta} = T(\Theta)$, $\thbtil = T(\thb)$, $\tilde{f} = f \circ T^{-1}$, one can show that for arbitrary $\x \in \R^{\PP}$ and $\xtil = T(\x)$,
\[
	\bigl\langle \nabla \tilde{f}(\thbtil), \xtil - \thbtil \bigr\rangle
	\ = \ \bigl\langle \nabla f(\thb), \x - \thb \bigr\rangle ,
\]
so
\[
	\Psi(\thb) \ = \ \argmin_{\x \in \Theta}
		\bigl( f(\thb) + \bigl\langle \nabla f(\thb), \x - \thb\bigr\rangle
			+ 2^{-1} \bigl\| \A_{\thb}(\x) - \A_{\thb}(\thb) \bigr\|^2 \bigr)
\]
with
\[
	\A_{\thb}(\x) \ := \ \bigl( \tilde{v}_{jk}(\thb)^{1/2} T_{jk}(\x) \bigr)_{(j,k) \in \PP}
\]
and $\tilde{v}_{jk}(\thb) := \partial^2 \tilde{f}(\thbtil)/\partial \tilde{\theta}_{jk}^2$. It follows from parts~(i) and (ii) of Lemma~\ref{Lem:Quadratic.proxy} in Section~\ref{Subsec:Quadratic.proxy} that $\Psi$ is continuous on $\R^{\PP}$, and that $\delta(\thb) = \bigl\langle \nabla f(\thb), \thb - \Psi(\thb) \bigr\rangle > 0$ for $\thb \in \Theta \setminus \{\hat{\thb}\}$. Moreover,
\[
	f(\thb) - f(\hat{\thb})
	\ \le \ \max \Bigl( 2 \delta(\thb),
		\sqrt{2\delta(\thb)} \, \|\A_{\thb}(\thb - \hat{\thb})\| \Bigr) .
\]
But
\[
	\|\A_{\thb}(\x)\|^2
	\ \le \ \max_{(j,k) \in \PP} \, \tilde{v}_{jk}(\thb) \|T(\x)\|^2
	\ \le \ 3 \max_{(j,k) \in \PP} \, \tilde{v}_{jk}(\thb) \|\x\|^2 ,
\]
so
\[
	f(\thb) - f(\hat{\thb})
	\ \le \ \max \Bigl( 2 \delta(\thb), \beta_1(\thb)
		\sqrt{\delta(\thb)} \, \|\thb - \hat{\thb}\| \Bigr)
\]
with $\beta_1(\thb)$ being the square root of $6 \max_{(j,k) \in \PP} \, \tilde{v}_{jk}(\thb)$. In case of $\Psi = \Psi^{\rm row}$ and $\thb$ being row-wise calibrated, $\beta_1(\thb)^2$ is no larger than $6 \max_{1 \le j \le \ell} w_{j+}$, and in case of $\Psi = \Psi^{\rm col}$ and $\thb$ being column-wise calibrated, $\beta_1(\thb)^2 \le 6 \max_{1 \le k \le m} w_{+k}$.

Concerning the lower bound for the maximum of $f(\thb) - f \bigl( (1 - t)\thb + t \Psi(\thb) \bigr)$ over all $t \in [0,1]$, note that for arbitrary $\thb', \thb'' \in \R^{\PP}$,
\begin{align*}
	\frac{\mathrm{d}^2}{\mathrm{d}t^2} f((1 - t)\thb' + t \thb'') \
	&= \ n \sum_{(j,k) \in \PP} \exp((1 - t) \theta_{jk}' + t \theta_{jk}'')
		(\theta_{jk}' - \theta_{jk}'')^2 \\
	&\le \ n \max_{(j,k) \in \PP} \exp \bigl( \max(\theta_{jk}',\theta_{jk}'') \bigr)
		\|\thb' - \thb''\|^2 .
\end{align*}
Thus part~(iii) of Lemma~\ref{Lem:Quadratic.proxy} yieds the asserted lower bound with
\[
	\beta_2(\thb) \ := \ 2 n \max_{(j,k) \in \PP} \,
		\exp \bigl( \max(\theta_{jk}, \Psi_{jk}(\thb)) \bigr) .
		\qedhere
\]
\end{proof}

\begin{proof}[\textbf{Proof of Theorem~\ref{Thm:Convergence}}]
It follows from Lemma~\ref{Lem:Properties.Psi} and the construction of the sequence $(\thb^{(s)})_{s \ge 0}$ that
\[
	f(\thb^{(s)}) - f(\thb^{(s+1)}) \ \ge \ \beta(\thb^{(s)})
\]
for all $s \ge 0$ with some continuous function $\beta : \Theta \to [0,\infty)$ such that $\beta > 0$ on $\Theta \setminus \{\hat{\thb}\}$. Note that $f(\thb^{(s)})$ is antitonic in $s \ge 0$, so the sequence $(\thb^{(s)})_{s \ge 0}$ stays in the compact set $R_0 := \bigl\{ \thb \in \Theta : f(\thb) \le f(\thb^{(0)}) \bigr\}$. For each $\thb \in R_0 \setminus \{\hat{\thb}\}$, there exists a $\delta_{\thb} > 0$ such that the open ball $U(\thb,\delta_{\thb})$ with center $\thb$ and radius $\delta_{\thb}$ satisfies
\[
	|f - f(\thb)| < \beta(\thb)/3
	\ \ \text{and} \ \
	\beta > 2 \beta(\thb)/3
	\ \ \text{on} \ U(\thb,\delta_{\thb}) .
\]
In particular, if $\thb^{(s)} \in U(\thb,\delta_{\thb})$ for some $s \ge 0$, then $f(\thb^{(s+1)}) < f(\thb) - \beta(\thb)/3$. Consequently, $\thb^{(s)} \in U(\thb,\delta_{\thb})$ for at most one index $s \ge 0$. But for each $\epsilon > 0$, the compact set $\bigl\{ \thb \in R_0 : \|\thb - \hat{\thb}\| \ge \epsilon \bigr\}$ can be covered by finitely many of these balls $U(\thb,\delta_{\thb})$. Hence, $\|\thb^{(s)} - \hat{\thb}\| \ge \epsilon$ for at most finitely many indices $s \ge 0$.
\end{proof}

\subsection{Minimizing convex functions via quadratic approximations}
\label{Subsec:Quadratic.proxy}
%
Let $f : \R^d \to \R$ be a strictly convex and differentiable function, and let $\Theta \subset \R^d$ be a closed, convex set such that a minimizer
\[
	\hat{\thb} \ := \ \argmin_{\thb \in \Theta} \, f(\thb)
\]
exists. For $\thb_o \in \Theta$ and some nonsingular matrix $\A \in \R^{d\times d}$ consider the quadratic approximation
\[
	f_o(\x) \ := \ f(\thb_o) + \nabla f(\thb_o)^\top (\x - \thb_o)
		+ 2^{-1} \|\A\x - \A\thb_o\|^2
\]
of $f(\x)$. By construction, $f_o(\thb_o) = f(\thb_o)$ and $\nabla f_o(\thb_o) = \nabla f(\thb_o)$, and there exists a unique minimizer
\[
	\thb_* \ := \ \argmin_{\thb \in \Theta} f_o(\thb) .
\]
The next lemma clarifies some connections between $\thb_*$ and $\hat{\thb}$ in terms of the directional derivative
\[
	\delta_o \ := \ \nabla f(\thb_o)^\top (\thb_o - \thb_*)
	\ = \ - \frac{\mathrm{d}}{\mathrm{d}t} \Big|_{t=0} f(\thb_o + t (\thb_* - \thb_o)) .
\]

\begin{Lemma}
\label{Lem:Quadratic.proxy}
\textbf{(i)} \ The point $\thb_*$ equals $\thb_o$ if and only if $\thb_o = \hat{\thb}$. Furthermore,
\[
	2^{-1} \delta_o \ \le \ f_o(\thb_o) - f_o(\thb_*) \ \le \ \delta_o
\]
and
\[
	f(\thb_o) - f(\hat{\thb}) \
	\le \ \nabla f(\thb_o)^\top (\thb_o - \hat{\thb}) \
	\le \ \max \Bigl( 2 \delta_o,
			\sqrt{ 2 \delta_o} \, \|\A\hat{\thb} - \A\thb_o\| \Bigr) .
\]
\textbf{(ii)} \ If $f$ is continuously differentiable, the minimizer $\thb_*$ is a continuous function of $\thb_o \in \Theta$ and $\A$.\\[0.5ex]
\textbf{(iii)} \ If $f$ is even twice differentiable such that for some constant $c_o > 0$ and any $t \in [0,1]$,
\[
	\frac{\mathrm{d}^2}{\mathrm{d} t^2} \, f((1 - t)\thb_o + t \thb_*) \
	\le \ c_o \|\thb_o - \thb_*\|^2 ,
\]
then in case of $\thb_o \ne \hat{\thb}$,
\[
	\max_{t \in [0,1]} \, \bigl( f(\thb_o) - f((1 - t) \thb_o + t \thb_*) \bigr)
	\ \ge \
	2^{-1} \min \Bigl( \delta_o,
		\frac{\delta_o^2}{c_o \|\thb_o - \thb_*\|^2} \Bigr) .
\]
\end{Lemma}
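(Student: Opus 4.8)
The three parts are essentially standard facts about minimizing a convex function over a convex set via a quadratic surrogate that agrees to first order at $\thb_o$, so my plan is to exploit the variational inequality characterizing $\thb_*$ together with convexity of $f$. For part~(i), the key identity is the first-order optimality condition for $\thb_*$ as the minimizer of the (strictly convex) quadratic $f_o$ over $\Theta$: for all $\thb \in \Theta$,
\[
	\nabla f_o(\thb_*)^\top (\thb - \thb_*) \ \ge \ 0 ,
	\quad\text{i.e.}\quad
	\bigl( \nabla f(\thb_o) + \A^\top\A(\thb_* - \thb_o) \bigr)^\top (\thb - \thb_*) \ \ge \ 0 .
\]
Taking $\thb = \thb_o$ here gives $\delta_o = \nabla f(\thb_o)^\top(\thb_o - \thb_*) \ge \|\A\thb_* - \A\thb_o\|^2 \ge 0$, with $\delta_o = 0$ forcing $\A\thb_* = \A\thb_o$, hence $\thb_* = \thb_o$ since $\A$ is nonsingular; and then $\nabla f(\thb_o)^\top(\thb - \thb_o)\ge 0$ for all $\thb\in\Theta$, which by convexity of $f$ means $\thb_o = \hat\thb$. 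Conversely if $\thb_o = \hat\thb$ then $\nabla f(\thb_o)^\top(\thb-\thb_o)\ge 0$ on $\Theta$, so the same holds for $\nabla f_o$, giving $\thb_* = \thb_o$. The sandwich $2^{-1}\delta_o \le f_o(\thb_o) - f_o(\thb_*) \le \delta_o$ then follows by expanding $f_o(\thb_o) - f_o(\thb_*) = \delta_o - 2^{-1}\|\A\thb_*-\A\thb_o\|^2$ and using $0 \le \|\A\thb_*-\A\thb_o\|^2 \le \delta_o$.

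For the final chain of inequalities in part~(i): the left inequality $f(\thb_o) - f(\hat\thb) \le \nabla f(\thb_o)^\top(\thb_o - \hat\thb)$ is just the gradient (subgradient) inequality for the convex function $f$. For the right inequality, I would plug $\thb = \hat\thb$ into the variational inequality for $\thb_*$:
\[
	\nabla f(\thb_o)^\top(\hat\thb - \thb_*) \ \ge \ (\A^\top\A(\thb_* - \thb_o))^\top(\thb_* - \hat\thb)
	\ = \ \langle \A\thb_* - \A\thb_o, \A\thb_* - \A\hat\thb\rangle .
\]
Write $\nabla f(\thb_o)^\top(\thb_o - \hat\thb) = \delta_o - \nabla f(\thb_o)^\top(\hat\thb - \thb_*)$ and bound $-\nabla f(\thb_o)^\top(\hat\thb-\thb_*) \le -\langle\A\thb_*-\A\thb_o,\A\thb_*-\A\hat\thb\rangle = \|\A\thb_*-\A\thb_o\|^2 - \langle\A\thb_*-\A\thb_o,\A\thb_o-\A\hat\thb\rangle$. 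Using Cauchy--Schwarz and $\|\A\thb_*-\A\thb_o\|^2\le\delta_o$, this is at most $\delta_o + \sqrt{\delta_o}\,\|\A\thb_o - \A\hat\thb\|$, whence $\nabla f(\thb_o)^\top(\thb_o-\hat\thb) \le 2\delta_o + \sqrt{\delta_o}\,\|\A\hat\thb - \A\thb_o\|$; splitting into the cases $\sqrt{\delta_o}\le\|\A\hat\thb-\A\thb_o\|$ and the reverse converts the sum into the stated maximum (up to harmless constants — I will need to be a little careful to land exactly on $\max(2\delta_o, \sqrt{2\delta_o}\,\|\A\hat\thb-\A\thb_o\|)$, perhaps by instead estimating $\langle\A\thb_*-\A\thb_o,\A\thb_o-\A\hat\thb\rangle$ via completing the square). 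Part~(ii) follows from a standard argument: $\thb_*$ is the projection of $\thb_o - (\A^\top\A)^{-1}\nabla f(\thb_o)$ onto $\Theta$ in the $\A^\top\A$-inner product, and projections depend continuously on the point, the inner product, and (since $f$ is $C^1$) on $\thb_o$ through $\nabla f$.

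For part~(iii), set $\phi(t) := f((1-t)\thb_o + t\thb_*)$ on $[0,1]$, so $\phi(0) = f(\thb_o)$, $\phi'(0) = -\delta_o < 0$, and $\phi''(t) \le c_o\|\thb_o - \thb_*\|^2 =: c_1$. Taylor with the second-order bound gives $\phi(t) \le \phi(0) - \delta_o t + 2^{-1} c_1 t^2$ for $t\in[0,1]$, so $f(\thb_o) - \phi(t) \ge \delta_o t - 2^{-1}c_1 t^2$. Maximizing the right side over $t\in[0,1]$: the unconstrained maximizer is $t^\dagger = \delta_o/c_1$, with value $\delta_o^2/(2c_1)$; if $t^\dagger \le 1$ this is attainable, giving the bound $\delta_o^2/(2c_o\|\thb_o-\thb_*\|^2)$; if $t^\dagger > 1$, i.e. $\delta_o > c_1$, then $t = 1$ gives $\delta_o - 2^{-1}c_1 > 2^{-1}\delta_o$. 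Either way the maximum is at least $2^{-1}\min(\delta_o, \delta_o^2/(c_o\|\thb_o-\thb_*\|^2))$, as claimed; note $\thb_o\ne\hat\thb$ guarantees $\thb_*\ne\thb_o$ by part~(i), so the division is legitimate. The main obstacle I anticipate is purely bookkeeping in part~(i): getting the constants in the quadratic-in-$\sqrt{\delta_o}$ estimate to collapse exactly into the form $\max(2\delta_o,\sqrt{2\delta_o}\,\|\A\hat\thb-\A\thb_o\|)$ rather than some cruder bound; this likely requires completing the square in the term $\|\A\thb_*-\A\thb_o\|^2 - \langle\A\thb_*-\A\thb_o,\A\thb_o-\A\hat\thb\rangle$ and observing it is maximized (over the admissible range of $\|\A\thb_*-\A\thb_o\|^2\in[0,\delta_o]$) at one of the endpoints.
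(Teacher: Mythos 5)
Your overall architecture for part~(i) — first-order optimality for $\thb_*$, convexity of $f$, then the quadratic surrogate identity $f_o(\thb_o) - f_o(\thb_*) = \delta_o - 2^{-1}\|\A\thb_* - \A\thb_o\|^2$ combined with $\|\A\thb_* - \A\thb_o\|^2 \le \delta_o$ — is sound, and your parts~(ii) and (iii) are correct (the projection interpretation in~(ii) is a fine alternative to the paper's level-set argument). However, the step you yourself flag as delicate, the upper bound $\nabla f(\thb_o)^\top(\thb_o - \hat\thb) \le \max(2\delta_o, \sqrt{2\delta_o}\,\|\A\hat\thb - \A\thb_o\|)$, contains a sign error that is not mere bookkeeping. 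With $u := \A\thb_* - \A\thb_o$ and $w := \A\thb_o - \A\hat\thb$ you have $\A\thb_* - \A\hat\thb = u + w$, so $-\langle u, \A\thb_* - \A\hat\thb\rangle = -\|u\|^2 - \langle u, w\rangle$, whereas you wrote $+\|u\|^2 - \langle u, w\rangle$. This flips the quadratic from concave to convex in $t := \|u\|$, making your ``maximize at an endpoint of $[0,\sqrt{\delta_o}]$'' conclusion false; and the weaker bound you then obtain, $\hat\delta \le 2\delta_o + \sqrt{\delta_o}\,\|w\|$, is genuinely insufficient (take $\|w\| = 2\sqrt{\delta_o}$: your bound gives $4\delta_o$ while the lemma demands $2\sqrt{2}\,\delta_o$). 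With the sign corrected, your variational-inequality route does close, but the maximum of $t \mapsto \delta_o - t^2 + t\|w\|$ over $[0,\sqrt{\delta_o}]$ sits at the interior point $t = \|w\|/2$ when $\|w\| \le 2\sqrt{\delta_o}$, and one must then verify $\delta_o + \|w\|^2/4 \le \max(2\delta_o, \sqrt{2\delta_o}\,\|w\|)$ by a further case split.

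The paper avoids this fiddly analysis by a different and cleaner route: rather than inserting $\hat\thb$ into the variational inequality for $\thb_*$, it uses that $f_o(\thb_*) = \min_{\thb\in\Theta} f_o(\thb) \le f_o\bigl((1-t)\thb_o + t\hat\thb\bigr)$ for all $t\in[0,1]$, so with $\hat\delta := \nabla f(\thb_o)^\top(\thb_o - \hat\thb)$ and $\hat\gamma := \|\A\thb_o - \A\hat\thb\|^2$,
\[
	2\delta_o \ \ge \ 2\bigl(f_o(\thb_o) - f_o(\thb_*)\bigr)
	\ \ge \ \max_{t\in[0,1]}\bigl(2t\hat\delta - t^2\hat\gamma\bigr)
	\ = \ 2t_o\hat\delta - t_o^2\hat\gamma ,
	\quad t_o := \min(1, \hat\delta/\hat\gamma) ,
\]
and the two cases $\hat\delta \ge \hat\gamma$, $\hat\delta < \hat\gamma$ immediately give $\hat\delta \le 2\delta_o$ and $\hat\delta \le \sqrt{2\delta_o\hat\gamma}$, respectively. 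This lands exactly on the stated maximum with no Cauchy--Schwarz and no case analysis on the admissible range of $\|u\|$.
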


\begin{proof}[\textbf{Proof}]
By strict convexity of $f$, $\thb_o = \hat{\thb}$ if and only if
\[
	\frac{\mathrm{d}}{\mathrm{d}t} \Bigl|_{t=0} f(\thb_o + t(\thb - \thb_o))
	= \nabla f(\thb_o)^\top (\thb - \thb_o) \ \ge \ 0
	\quad\text{for all} \ \thb \in \Theta .
\]
But since $f_o$ is strictly convex, too, with $\nabla f_o(\thb_o) = \nabla f(\thb_o)$, the latter displayed condition is also equivalent to $\thb_o = \thb_*$.

Since the asserted inequalities are trivial in case of $\thb_o = \hat{\thb} = \thb_*$, let us assume in the sequel that $\thb_* \ne \thb_o \ne \hat{\thb}$. By convexity of $f$ and $f_o$,
\[
	f_o(\thb_o) - f_o(\thb_*)
	\ \le \ \frac{\mathrm{d}}{\mathrm{d}t} \Big|_{t = 1} f_o(\thb_* + t(\thb_o - \thb_*))
	\ = \ \delta_o
\]
and
\[
	f(\thb_o) - f(\hat{\thb})
	\ \le \ \frac{\mathrm{d}}{\mathrm{d}t} \Big|_{t = 1} f(\hat{\thb} + t(\thb_o - \hat{\thb}))
	\ = \ \nabla f(\thb_o)^\top (\thb_o - \hat{\thb}) .
\]
On the other hand, since $\thb_*$ minimizes $f_o$ over $\Theta$,
\[
	0 \ \le \ \frac{\mathrm{d}}{\mathrm{d}t} \Big|_{t=0} f_o(\thb_* + t(\thb_o - \thb_*))
	\ = \ \nabla f_o(\thb_*)^\top (\thb_o - \thb_*)
	\ = \ \delta_o - \|\A\thb_o - \A\thb_*\|^2 ,
\]
so
\[
	f_o(\thb_o) - f_o(\thb_*)
	\ = \ \delta_o - 2^{-1} \|\A\thb_o - \A\thb_*\|^2
	\ \ge \ 2^{-1} \delta_o .
\]
Moreover, with $\hat{\delta} := \nabla f(\thb_o)^\top (\thb_o - \hat{\thb})$ and $\hat{\gamma} := \|\A\thb_o - \A\hat{\thb}\|^2$,
\begin{align*}
	2 \delta_o \ \ge \
	2 \bigl( f_o(\thb_o) - f_o(\thb_*) \bigr) \
	&= \ 2 \max_{\thb \in \Theta} \,
		\bigl( f_o(\thb_o) - f_o(\thb) \bigr) \\
	&\ge \ 2 \max_{t \in [0,1]} \,
		\bigl( f_o(\thb_o) - f_o((1 - t) \thb_o + t \hat{\thb}) \bigr) \\
	&= \ \max_{t \in [0,1]} \,
		\bigl( 2 t \hat{\delta} - t^2 \hat{\gamma} \bigr) \\
	&= \ 2 t_o \hat{\delta}
		- t_o^2 \hat{\gamma} ,
\end{align*}
where $t_o := \min \bigl( 1, \hat{\delta}/\hat{\gamma} \bigr)$.
In case of $\hat{\delta} \ge \hat{\gamma}$, we may conclude that $2\delta_o \ge 2\hat{\delta} - \hat{\gamma} \ge \hat{\delta}$, so $\hat{\delta} \le 2 \delta_o$, and otherwise, $2 \delta_o \ge \hat{\delta}^2 / \hat{\gamma}$, whence $\hat{\delta} \le \sqrt{2 \delta_o \hat{\gamma}}$. This proves part~(i).

As to part~(ii), let $(\thb_o^{(s)})_{s \ge 1}$ be a sequence in $\Theta$ with limit $\thb_o$, and let $(\A^{(s)})_{s \ge 1}$ be a sequence of nonsingular matrices in $\R^{d\times d}$ converging to a nonsingular matrix $\A$. Definining $f_o^{(s)}$ as $f_o$ with $(\thb_o^{(s)}, \A^{(s)})$ in place of $(\thb,\A)$, we know that $f_o^{(s)} \to f_o$ as $s \to \infty$ uniformly on any bounded subset of $\R^d$. Consequently, for any fixed $\epsilon > 0$ and $R_\epsilon := \bigl\{ \thb \in \Theta : \|\thb - \thb_*\| = \epsilon \bigr\}$,
\[
	\gamma_\epsilon^{(s)}
		:= \min_{\thb \in R_\epsilon} \, f_o^{(s)}(\thb) - f_o^{(s)}(\thb_*)
	\ \to \
	\gamma_\epsilon
		:= \min_{\thb \in R_\epsilon} \, f_o(\thb) - f_o(\thb_*) > 0
\]
as $s \to \infty$. But as soon as $\gamma_\epsilon^{(s)} > 0$, it follows from convexity of $\Theta$ and $f^{(s)}$ that the minimizer $\thb_*^{(s)}$ of $f_o^{(s)}$ satisfies $\|\thb_*^{(s)} - \thb_*\| < \epsilon$.

Part~(iii) follows from
\begin{align*}
	\max_{t \in [0,1]} \,
		\bigl( f(\thb_o) - f((1 - t)\thb_o + t \thb_*) \bigr) \
	&= \ \max_{t \in [0,1]} \,
		\bigl( f(\thb_o) - f(\thb_o + t (\thb_* - \thb_o)) \bigr) \\
	&\ge \ \max_{t \in [0,1]} \bigl( t \delta_o
		- 2^{-1} t^2 c_o \|\thb_o - \thb_*\|^2 \bigr) \\
	&= \ t_o \delta_o - 2^{-1} t_o^2 c_o \|\thb_o - \thb_*\|^2 \\
	&\ge \ 2^{-1} \min \Bigl( \delta_o,
		\frac{\delta_o^2}{c_o \|\thb_o - \thb_*\|^2} \Bigr) ,
\end{align*}
where $t_o := \min \bigl( 1, \delta_o / (c_o \|\thb_o - \thb_*\|^2) \bigr)$.
\end{proof}

\subsection{Technical details for Sections~\ref{Sec:Simulation.study}}
\label{Sec:RewritingScores}
%
For fixed $\ell_o,n\in\N$, let $(\hat{G}_x)_{x\in\mathfrak{X}_o}$, $(\check{G}_x)_{x\in\mathfrak{X}_o}$ and $(\hat{\mathbb{G}}_x)_{x\in\mathfrak{X}_o}$ be estimates of $(G_x)_{x\in\mathfrak{X}_o}$ from a sample $\{(X_i,Y_i)\}_{i=1}^n$ as described in Section~\ref{Sec:Sampling}. Then, for all $\tilde{G}\in \{\hat{G}, \check{G}, \hat{\mathbb{G}}\}$ and $x\in\mathfrak{X}_o$, the estimate $\tilde{G}_{x}$ is a step function with jumps in the set $\{y_1,\ldots,y_m\}$ of unique observations. For convenience, we further denote $y_0:=0$, $y_{m+1}:=\infty$, and define
\[
	\tilde{G}_{jk} \ := \ \tilde{G}_{x_j}(y_k),
	\quad 0 \le k \le m,
	\qquad \text{and} \qquad
	\tilde{G}_{jm+1} \ := \ 1,
\]
for all $1\le j \le \ell_o$ and $\tilde{G}\in \{G, \hat{G}, \check{G}, \hat{\mathbb{G}}\}$. 

For the remainder of this section, we fix $1\le j \le \ell_o$ and $\tilde{G}\in \{\hat{G}, \check{G}, \hat{\mathbb{G}}\}$. Observe that $R_{x_j}(\tilde{G}, G)$ is the sum of the terms
\[
	R_{x_j}^{(k)}(\tilde{G}, G)
	\ = \ 
	\int_{y_k}^{y_{k+1}} 
		| \tilde{G}_{jk} - G_{x_j}(y) | \, 
		g_{x_j}(y)
		\,\mathrm{d}y,
\]
defined for $0 \le k \le m$, where $g_{x_j}$ is the density of $Q_{x_j}$ with respect to Lebesgue measure. But since
\[
	\int_\alpha^\beta G_{x_j}(y) g_{x_j}(y) \,\mathrm{d}y
	\ = \
	\frac{G_{x_j}(\beta)^2 - G_{x_j}(\alpha)^2}{2},
\]
we find that
\begin{align*}
	R_{x_j}^{(0)}(\tilde{G}, G)
	\ &= \
	G_{j1}^2/2, \\
	R_{x_j}^{(k)}(\tilde{G}, G)
	\ &= \
	\begin{cases}
		\rho( \tilde{G}_{jk}, G_{jk+1} ) -
		\rho( \tilde{G}_{jk}, G_{jk  } )
		& \text{if}\ \tilde{G}_{jk} \ge G_{jk+1}, \\
		\rho( \tilde{G}_{jk}, G_{jk  } ) -
		\rho( \tilde{G}_{jk}, G_{jk+1} )
		& \text{if}\ \tilde{G}_{jk} \le G_{jk}, \\		
		\tilde{G}_{jk}^2 -		
		\rho( \tilde{G}_{jk}, G_{jk  } ) -
		\rho( \tilde{G}_{jk}, G_{jk+1} )
		& \text{otherwise,}
	\end{cases}\\
	R_{x_j}^{(m)}(\tilde{G}, G)
	\ &= \
	1/2 - \rho(1, G_{jm}),
\end{align*}
for $1\le k < m$, where $\rho(z_1, z_2) \ := \ z_1z_2 - z_2^2/2$.

Similarly, the computation of the CRPS involves the sum of the following integrals
\[
	S_{x_j}^{(k)}(\tilde{G}, G)
	\ := \
	\int_{y_k}^{y_{k+1}} 
		\bigl(\tilde{G}_{jk} - G_{x_j}(y) \bigr)^2 \, \mathrm{d}y,
\]
defined for $0 \le k \le m$. But integration by parts yields
\[
	\int_{\alpha}^\beta G_{x_j}(y)\, \mathrm{d}y
	\ = \
	\beta G_{x_j}(\beta) - \alpha G_{x_j}(\alpha) 
	- c_j
	\bigl(
		\bar{G}_{x_j}(\beta) - \bar{G}_{x_j}(\alpha)
	\bigr)
\]
where $c_j := b(x_j)\Gamma(a(x_j)+1)/\Gamma(a(x_j))$ and $\bar{G}_{x_j}$ denotes the cumulative distribution function of a Gamma distribution with shape $a(x_j)+1$ and scale $b(x_j)$. In consequence, if we define $\bar{G}_{jk}:= \bar{G}_{x_j}(y_k)$ and
\[
	I_{x_j}^{(k)} \ := \ 
	\tilde{G}_{jk}^2 (y_{k+1} - y_k)
	- 2 \tilde{G}_{jk} \Bigl(
		y_{k+1} G_{jk+1} - y_k G_{jk}
		- c_j
		( \bar{G}_{jk+1} - \bar{G}_{jk} )
	\Bigr)
\]
for $1 \le k < m$, we obtain
\begin{align*}
	\sum_{k=0}^m S_{x_j}^{(k)}(\tilde{G}, G)
	\ = \
	\int_{0}^{y_m} G_{x_j}(y)^2 \, \mathrm{d}y +
	\int_{y_m}^{\infty} \bigl(1 - G_{x_j}(y)\bigr)^2 \, \mathrm{d}y +
	\sum_{k=1}^{m-1} I_{x_j}^{(k)},
\end{align*}
where the above two integrals are computed numerically.
\end{document}